\theoremstyle{plain}                          
\newtheorem{theorem}{Theorem}[section]
\newtheorem{proposition}[theorem]{Proposition}    
\newtheorem{lemma}[theorem]{Lemma}
\newtheorem{corollary}[theorem]{Corollary}
\theoremstyle{definition}
\newtheorem{definition}[theorem]{Definition}
\newtheorem{prop-defin}[theorem]{Proposition-definition} 
\newtheorem{example}[theorem]{Example}
\theoremstyle{remark}
\newtheorem{remark}[theorem]{Remark}
\newcommand{\normord}[1]{\vcentcolon\mathrel{#1}\vcentcolon}
\newcommand{\mb}[1]{\mathbb{#1}} 
\newcommand{\mf}[1]{\mathfrak{#1}}
\newcommand{\mc}[1]{\mathcal{#1}}
\newcommand{\1}{\mathds{1}}
\newcommand{\C}{\mb{C}} 
\newcommand{\Z}{\mb{Z}} 
\newcommand{\Q}{\mb{Q}}
\renewcommand{\P}{\mb{P}}
\newcommand{\M}{\mc{M}}
\newcommand{\Mgn}{\overline{\mc{M}}_{g,n}}
\newcommand{\del}{\partial}
\DeclareMathOperator{\End}{End}
\DeclareMathOperator{\Aut}{Aut}
\newcommand{\Id}{\mathord{\mathrm{Id}}}
\DeclareMathOperator*{\Res}{Res}
\DeclareRobustCommand{\Stirling}{\genfrac\{\}{0pt}{}}
\begin{document}

\title{KP hierarchy for Hurwitz-type cohomological field theories}

\author[R.~Kramer]{Reinier Kramer}
\address{University of Alberta, Edmonton, AB, T6G 2G1, Canada}
\email{reinier@ualberta.ca}

\thanks{}

\begin{abstract}
	We generalise a result of Kazarian regarding Kadomtsev-Petviashvili integrability for single Hodge integrals to general cohomological field theories related to Hurwitz-type counting problems or hypergeometric tau-functions. The proof uses recent results on the relations between hypergeometric tau-functions and topological recursion, as well as the DOSS correspondence between topological recursion and cohomological field theories. As a particular case, we recover the result of Alexandrov of KP integrability for triple Hodge integrals with a Calabi-Yau condition.
\end{abstract}

\maketitle

\tableofcontents


\section{Introduction}

The moduli spaces of curves are a central object in modern algebraic geometry, and have been studied intensively. In particular, their intersection theory is a subject of ongoing research. The space $ \Mgn $ has $ n$ line bundles $ \mb{L}_i $ whose fibres at a point are the cotangent lines at the $ i$th point of the represented curve, and a rank-$ g$ Hodge bundle $ \mb{E} $ whose fibres are the space of one-forms at the curve. Their Chern classes are defined to be $ \psi_i \coloneqq c_1(\mb{L}_i) $ and $ \lambda_j \coloneqq c_j (\mb{E}) $, respectively. Moreover, the spaces $\Mgn$ for different $g$ and $n$ have many structure maps between them, and many classes behave well under these maps. A collection of classes on all $ \Mgn$ satisfying certain coherence axioms with respect to the structure maps are called cohomological field theories (CohFTs), and these play an important role in enumerative geometry of curves. One well-known example is the total Hodge class $ \Lambda (t) = \sum \lambda_i t^i $.\par
By the Witten-Kontsevich theorem~\cite{Witten,Kontsevich}, moduli spaces of curves have many relations to areas of mathematical physics and integrable hierarchies. In particular, this theorem proves that a generating function of the intersection numbers of $ \psi $-classes is a tau-function of the Korteweg-de Vries hierarchy.\par
Furthermore, the Ekedahl-Lando-Shapiro-Vainshtein formula~\cite{ELSV} relates single Hodge integrals, i.e. intersection numbers of $\Lambda (-1)$ with $ \psi$-classes, to simple single Hurwitz numbers, counting ramified coverings of $ \P^1_\C $ with only simple ramifications (with profile $ (2,1,1,1,\dotsc )$) except for one point. Hurwitz numbers themselves also give a large class of tau-functions of Toda or Kadomtsev-Petviashvili hierarchies (of which the KdV hierarchy is a reduction), as noted by Okounkov~\cite{Okounkov00}.\par
Kazarian~\cite{KazarianHodge} interpreted the ELSV formula as a change of variables from the generating function of single Hodge integrals to a tau-function of the Kadomtsev-Petviashvili hierarchy, using the result of Okounkov on simple single Hurwitz numbers.\par
All of these results have strong relations to Chekhov-Eynard-Orantin topological recursion \cite{CEO06,EO07}, a succesful way of encoding many counting problem with a natural genus expansion into a spectral curve with a recursively defined collection of multidifferentials, which should be generating functions of the counts. The Witten-Kontsevich $\psi$-intersection numbers can be encoded this way, and this is somehow the base case of the theory. Many types of Hurwitz numbers obey topological recursion as well, starting with \cite{BM08,BEMS11} for the first case of simple Hurwitz numbers, and culminating in the works of Bychkov-Dunin-Barkowski-Kazarian-Shadrin~\cite{BDKS20,BDKS20a}, which prove topological recursion for two large families of hypergeometric KP tau-functions, encompassing nearly all previously-studied cases of Hurwitz numbers.\par
In another direction, there is a general correspondence between topological recursion and intersection numbers of CohFTs \cite{Eyn14,DOSS14}, which vastly generalises the ELSV formula when combined with the results on topological recursion for Hurwitz numbers.\par
A particularly interesting case is the conjecture of Mari\~{n}o and Vafa~\cite{MarinoVafa} on a further generalisation of the ELSV formula, proved independently in \cite{LLZ03,OP04}. This Mari\~{n}o-Vafa formula relates triple Hodge integrals with a Calabi-Yau condition to topological vertex amplitudes, i.e. Gromov-Witten invariants of $ \C^3 $. Topological recursion was conjectured for toric Calabi-Yau threefolds by Bouchard-Klemm-Mariño-Pasquetti~\cite{BKMP09}. It was first proved in \cite{Che18,Z09} for $ \C^3$, as well as in \cite{Eyn11} as an example of the general correspondence of \cref{EynardDOSS}, while the general BKMP conjecture was proved in \cite{EyOr}.\par
Both the space of CohFTs and the space of KP tau-functions have an action of an infinite-dimensional group, respectively the Givental group and the Heisenberg-Virasoro group. As certain elements of these spaces have been identified by Witten-Kontsevich and Kazarian, and different integrable hierarchies have been constructed for general CohFTs by Dubrovin-Zhang~\cite{DubrovinZhang-NormalForms} and Buryak~\cite{Bur15}, one may ask how general the relation is with KP specifically, and the group actions are a natural tool to study this question.\par
Alexandrov~\cite{Ale20} showed that in the case of a rank-one CohFT, the orbits of the Witten-Kontsevich CohFT/tau-function under these two different group actions have an intersection which is only two-dimensional, and contains exactly the triple Hodge integrals that appear in the Mariño-Vafa formula. As a consequence, Alexandrov generalises Kazarian's result to show that the generating function of Calabi-Yau triple Hodge integrals satisfies the KP hierarchy after a linear change of variables.

\subsection*{Results of this paper}
We give a new viewpoint on the relation found by Alexandrov, by generalising Kazarian's proof in \cite{KazarianHodge} to all hypergeometric KP tau-functions satisfying topological recursion, using the above results. This yields a change of variables coming from the function $X$ for any hypergeometric tau-function preserving the KP hierarchy after removing the unstable terms of the tau-function. When topological recursion holds, this resulting tau-function can be interpreted as the generating function of the cohomological field theory.\par
In general, the change of variables contains infinite linear combinations. However, we identify when the linear combinations are actually finite, and find a finite-dimensional family for each CohFT rank. In the rank one case, this recovers exactly the triple Hodge integrals, in a particular parametrisation. For higher rank, this family seems to fit within Alexandrov's deformed generalised Kontsevich model~\cite{Ale21}.\par
Interestingly, the function $X$ may also be a Möbius transformation. In this case, there is no unstable correction term, and this can be interpreted as certain independence of the parametrisation of the spectral curve. This also resolves the meaning behind Kazarian's change of coordinates, as voiced in \cite[Remark 2.6]{KazarianHodge}: ``The definition for the change (6) looks unmotivated. [...] The only motivation that we can provide here is that `it works'.'' There is quite a freedom of choice, but the particular choice Kazarian made reduces to the finite-dimensional family indicated above.

\subsection*{Open questions}

Single and triple Hodge integrals have been studied intensively in relation to Dubrovin-Zhang hierarchies, yielding relations to the intermediate long wave (ILW) hierarchy and the fractional Volterra hierarchy, cf. \cite{Buryak-ILW1,Buryak-ILW2,LYZZ21}. The relation between those results and the current work are still unclear, and will be discussed elsewhere. Between the  first and second versions of this preprint, Liu-Wang-Zhang \cite{LWZ21} related the  ILW hierarchy to a limit of fractional Volterra hierarchy viewed as a reduction of the 2D Toda hierarchy, possibly giving a new avenue to relating to the current paper.\par
The family where the linear change of variables is finite seems like an interesting and natural deformation of Witten's $r$-spin class, keeping a single ramification point, but splitting the pole of $dx$. However, this family seems mostly unknown, with the exception of Alexandrov's work mentioned above. It may be interesting to investigate it more closely, in order to better understand the deformation of higher-order zeroes of $dx$.\par
Currently, there is a gap in the literature on limits of spectral curves, which in particular limits the validity of the proof \cref{OScorrelators}, and hence the applicability of the main theorem of this paper, to $dx$ with simple zeroes. Future work with Borot, Bouchard, Chidambaram, and Shadrin will fix this, and will investigate more generally the applicability of limit arguments for topological recursion.\par
For the BKP hierarchy, similar results should hold. In particular, Alexandrov and Shadrin~\cite{AS21} proved an adapted topological recursion for a large class of hypergeometric BKP tau-functions, analogous to \cref{OScorrelators}. The analogous ELSV-Eynard-DOSS correspondence between this kind of topological recursion and cohomological field theories has not appeared in the literature, but the special case of completed cycles spin Hurwitz numbers is treated in work of the author with Giacchetto and Lewa\'{n}ski~\cite{GKL21}.

\subsection*{Outline of the paper}

\Cref{KazKP} contains prerequisites. In \cref{KPintro,InfGrass}, we give a short introduction to the Kadomtsev-Petviashvili hierarchy and its space of solutions. In \cref{RecallHodge}, we recall the main ideas from~\cite{KazarianHodge}, which we will generalise. In \cref{HypGeoIntro,TRCohFT}, we recall recent results on hypergeometric tau-functions and their relations to topological recursion and cohomological field theories, and state our main theorem, which is \cref{KPforHurwitzCohFT}. We also introduce, in \cref{MVandKP}, the generating function of triple Hodge numbers, which is the main motivating example of this paper.\par
In \cref{sec:KPforTH}, we prove the main result. Firstly, in \cref{changeofvars}, we find a change of variables, for any hypergeometric tau-function, that preserves the property of being a tau-function after removal of unstable terms, \cref{KPtauWithoutUnstable}. In \cref{KPwithTR}, we restrict to the case where topological recursion holds, and use this machinery to obtain tau-functions of intersection numbers, proving our main result. We also determine, in \cref{Finiteness}, the exact conditions for the change of variables to be finite, in a specific sense. Finally, in \cref{THsection}, we return to the triple Hodge integrals, and prove an explicit version of the main theorem for this case.

\subsection*{Notation} 

We work over the field of complex numbers $\C$. We will use the function $ \varsigma (z) \coloneqq e^{\frac{z}{2}} - e^{-\frac{z}{2}} $ and $ \mc{S}(z) = \frac{\varsigma (z)}{z}$, $ \mu $ and $ \nu $ will denote partitions, and $ z_\mu \coloneqq \prod_{i=1}^{\mu_1} i^{m_i(\mu )}m_i (\mu )! $, where $ m_i (\mu )$ is the number of parts of $ \mu $ of size $ i$. We will also consistently write $ n \coloneqq \ell (\mu ) $ and $ \llbracket n \rrbracket \coloneqq \{ 1, \dotsc, n \}$.

\subsection*{On the origin of this paper}

An earlier version of this text, only concerning triple Hodge integrals, was written in 2018, shortly after A. Alexandrov informed me of his result. That version appeared in my PhD dissertation \cite[Chapter 10]{Kra19}. This paper is an updated and extended version of that chapter.

\subsection*{Acknowledgments}

I would like to thank A. Alexandrov for informing me of his theorem on triple Hodge integrals, S. Shadrin for introducing me to the subject and suggesting generalising Kazarian's method to this case, and both of them, G. Carlet, N. Chidambaram, and A. Giacchetto for many interesting discussions. I would also like to thank A. Alexandov and S. Shadrin for suggesting improvements to a previous version of this paper, and P. Norbury for finding errors by calculating \cref{NaiveHodge}, spurring me to double-check and do this example myself.\par
Research conducted for this paper is supported by the Netherlands Organization for Scientific Research, the Max-Planck-Gesellschaft, the Natural Sciences and Engineering Research Council of Canada, and the Pacific Institute for the Mathematical Sciences (PIMS). The research and findings may not reflect those of these institutions.\par
The University of Alberta respectfully acknowledges that we are situated on Treaty 6 territory, traditional lands of First Nations and Métis people.

\section{Prerequisites on the KP hierarchy and topological recursion}\label{KazKP}

In this section, we review some standard notions on the KP hierarchy and its relations to the infinite Grassmannian. We give the main outline of Kazarian's proof of KP for single Hodge integrals, which we will use as a blueprint for our results. We also recall the class of hypergeometric tau-functions, which fulfills a central role in this paper, as well as its relation to topological recursion and cohomological field theories. Finally, we recall the Mariño-Vafa formula for triple Hodge integrals and show it fits in the setup.

\subsection{The KP hierarchy}\label{KPintro}

The Kadomtsev-Petviashvili hierarchy is an infinite set of evolutionary differential equations in infinitely many variables. It is a very well-studied system, and some introductions into the subject can be found in \cite{DickeyBook,Kharchev,MJD}.\par
Let $ \underline{t} = \{ t_i \}_{i\geq 1} $ be a set of independent variables and $ \del \coloneqq \frac{\del}{\del t_1} $. Define the pseudo-differential operator (i.e. a Laurent series in $ \del^{-1} $ with coefficients functions in $ \underline{t} $ with composition defined formally)
\begin{equation}\label{LaxOp}
	L = \del + u_1 \del^{-1} + u_2 \del^{-2} + \dotsc\,.
\end{equation}
where the $ u_j $ are dependent variables in the $ t_i $. For a pseudo-differential operator $ O$, define $ O_+ $ to be its purely differential part, the part without powers of $ \del^{-1} $. The \emph{Lax formulation of the KP hierarchy} is given by the system of equations
\begin{equation}\label{LaxForm}
	\frac{\del L}{\del t_i} = \big[ (L^i)_+,L\big]\,.
\end{equation}
This is a system of partial differential equations for the $ u_j $, and they can be interpreted as the compatibility equations for the system
\begin{align}
	L\Psi &= z \Psi & \frac{\del \Psi}{\del t_i} = (L^i)_+\Psi\,.
\end{align}
The function $ \Psi $ is called the \emph{Baker-Akhiezer} function. The first non-trivial equation, the KP equation, is
\begin{equation}
	3\frac{\del^2 u_1}{\del t_2^2} - \frac{\del}{\del t_1} \Big( 4 \frac{\del u_1}{\del t_3} -12 u_1 \frac{\del u_1}{\del t_1} - \frac{\del^3 u_1}{\del t_1^3} \Big) = 0 \,.
\end{equation}
The Baker-Akhiezer function can be written in the form
\begin{equation}
	\Psi = \frac{\tau \big( \big\{ t_k - \frac{z^{-k}}{k} \big\} \big)}{\tau (\{ t_k \} )} e^{\xi (\underline{t}, z)} \,, \qquad \xi (\underline{t},z) =  \sum_{k=1}^\infty t_k z^k \,.
\end{equation}
Here $ \tau $ is a single function, called a tau-function, dependent on the $ t_k $, and all dependent variables can be expressed in terms of this one function. This way, the entire hierarchy can be rewritten as bilinear equations for $ \tau $ called \emph{Hirota equations}:
\begin{equation}
	\Res_{z = \infty} dz \, e^{\xi (\underline{t}, z) - \xi (\underline{t}', z)} \tau \big( \big\{ t_k - \frac{1}{k z^k} \big\} \big) \tau \big( \big\{ t_k' + \frac{1}{k z^k} \big\} \big)\,.
\end{equation} 
Writing $ F = \log \tau$, we find $ u_1 = \del^2 \log \tau$, and the first two equations are
\begin{align}
	0 &= 3 \frac{\del^2 F}{\del t_2^2} - 4  \frac{\del^2 F}{\del t_3 \del t_1}  + \frac{\del^4 F}{\del t_1^4} + 6 \Big( \frac{\del^2 F}{\del t_1^2} \Big)^2\,;\\
	0 &= 2 \frac{\del^2 F}{\del t_3 \del t_2} - 3 \frac{\del^2 F}{\del t_4 \del t_1} + \frac{\del^4 F}{\del t_2 \del t_1^3} + 6 \frac{\del^2 F}{\del t_2 \del t_1} \frac{\del^2 F}{\del t_1^2}\,.
\end{align}




\subsection{Space of tau-functions and Lie action}\label{InfGrass}

The space of solutions of the KP hierarchy is an infinite-dimensional Grassmannian \cite{Sato82}, which is usually Pl\"ucker embedded in a Fock space, i.e. a highest weight module of a certain Clifford algebra. The Hirota equations are then the Plücker relations defining the Grassmannian inside the Fock space. By the boson-fermion correspondence, this can also be expressed in terms of symmetric functions, which is the viewpoint we will adopt here.

\begin{definition}
	We write $ \Lambda \coloneqq \C \llbracket p_1, p_2, \dotsc \rrbracket$ for the \emph{space of symmetric functions}, also called the \emph{bosonic Fock space} (of type A). Here the $p_k$ are power-sum functions $ p_k = \sum_i X_i^k$ in some countably infinite variable set $ \underline{X} =  \{ X_i\}$.\par
	For other symmetric functions in $ \underline{X} $, e.g. the Schur functions $ s_\lambda$, we write $ s_\lambda (\underline{p}) \coloneqq s_\lambda (\underline{X})$.
\end{definition}

The space of symmetric functions has a projective action of the Lie algebra $ \mf{gl}(\infty) $, the algebra of infinite square matrices $ (a_{ij})_{i,j \in \Z + \frac{1}{2}} $.\footnote{In order to make the Lie bracket well-defined, some decay condition is needed. A common choice is restriction to finitely many diagonals, but there are other options, see e.g. \cite{Sato82}. We will remain agnostic on this choice, as in this paper, the required convergence in guaranteed by our constructions.} This space has a standard basis given by $ E_{kl} = (\delta_{ik}\delta_{jkl})_{ij} $. 
Define the \emph{vertex operator}
\begin{equation}
	Z(z,w) = \frac{1}{z-w} \bigg( \exp \Big( \sum_{j=1}^\infty (z^j-w^j)p_j\Big) \exp \Big( -\sum_{k=1}^\infty (z^{-k}-w^{-k})\frac{1}{k} \frac{\del}{\del p_k} \Big) -1 \bigg)\,.
\end{equation}
Then expanding this vertex operator as
\begin{equation}
 Z(z,w) = \sum_{i,j \in \Z + \frac{1}{2}} Z_{ij} z^{i+1/2} w^{-j-1/2}\,,
\end{equation}
the assignment $ E_{ij} \mapsto Z_{ij} $ is a projective representation of $ \mf{gl}(\infty )$, i.e. a representation of a central extension $ \widehat{\mf{gl}(\infty)}$.\par
The matrices $ \alpha_k = \sum_{l \in \Z + \frac{1}{2}} E_{l-k,l} $ give rise to the following operators on $ \Lambda $:
\begin{equation}
	a_k \coloneqq  \begin{cases}p_k & k > 0\\ -k \frac{\del}{\del p_{-k}} & k <0\\ 0 & k =0 \end{cases}\,.
\end{equation}
We also define the following operators:
\begin{align}
	L_m &\coloneqq \frac{1}{2}\sum_{i=-\infty}^\infty \normord{a_i a_{m-i}}\,, & M_l &\coloneqq \frac{1}{6} \sum_{i,j = -\infty}^\infty \normord{a_i a_j a_{l-i-j}}\,,
\end{align}
where the $ \normord{ \;} $, the \emph{normal ordering}, means one should order the operators inside in order of decreasing index. All of these operators are in $ \widehat{\mf{gl}(\infty)} $.\par

\begin{theorem}[\cite{Sato82}]
	Under the identification $ t_k = \frac{p_k}{k}$, the space of KP tau-functions is the orbit of $ 1 \in \Lambda$ under the action of $ \widehat{\mf{gl}(\infty)}$.
\end{theorem}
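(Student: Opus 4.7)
My plan is to pass through the boson-fermion correspondence, which provides an isomorphism between the bosonic Fock space $\Lambda$ and a fermionic Fock space $\mc{F}$ built from a Clifford algebra generated by $\psi_i,\psi_i^*$ for $i \in \Z + \tfrac{1}{2}$. Under this isomorphism the unit $1 \in \Lambda$ corresponds to the vacuum vector $|0\>$, the Heisenberg generators $a_k$ correspond to the fermion bilinears $\sum_l \normord{\psi_{l-k}\psi_l^*}$, and the vertex operator $Z(z,w)$ corresponds to the regularised product $\psi(z)\psi^*(w)$. In particular the action $E_{ij}\mapsto Z_{ij}$ of $\widehat{\mf{gl}(\infty)}$ on $\Lambda$ matches the standard bilinear action of $\widehat{\mf{gl}(\infty)} \subset \mc{C}\ell$ on $\mc{F}$, and the central extension is forced by the normal ordering.

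Next I would give an algebraic reformulation of the Hirota equations. Introducing the fermionic fields $\psi(z)=\sum_i \psi_i z^{-i-1/2}$ and $\psi^*(z)=\sum_i \psi_i^* z^{-i-1/2}$, the bilinear identity
\begin{equation}
\Res_{z=\infty} dz\; e^{\xi(\underline{t},z)-\xi(\underline{t}',z)}\, \tau\bigl(\{t_k - \tfrac{1}{kz^k}\}\bigr)\, \tau\bigl(\{t_k'+\tfrac{1}{kz^k}\}\bigr)=0
\end{equation}
translates, under the boson-fermion correspondence, into the \emph{Pl\"ucker relation}
\begin{equation}
\sum_{k \in \Z + \tfrac{1}{2}} \psi_k\tau \otimes \psi_k^*\tau = 0 \quad \text{in } \mc{F}\otimes \mc{F}.
\end{equation}
This is precisely the condition that $\tau$ be decomposable as a semi-infinite wedge, i.e.\ a point of the Sato Grassmannian.

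With this reformulation, the theorem splits cleanly into two inclusions. For the forward direction, I would observe that the element $\Omega \coloneqq \sum_k \psi_k\otimes\psi_k^* \in \mc{C}\ell \otimes \mc{C}\ell$ commutes with $g\otimes g$ for every $g$ in the Clifford group whose adjoint action preserves $\widehat{\mf{gl}(\infty)}$; since $\Omega |0\>\otimes|0\>=0$ trivially, every $g\cdot|0\>$ solves the Pl\"ucker relation and thus gives a tau-function. For the reverse direction, given a decomposable element $v_1\wedge v_2\wedge \cdots$ of $\mc{F}$ one constructs an element of $\widehat{GL(\infty)}$ sending the vacuum wedge $e_{-1/2}\wedge e_{-3/2}\wedge \cdots$ to it by completing $\{v_i\}$ into an admissible basis; this is essentially a Gauss-type decomposition argument in the Sato Grassmannian.

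The main obstacle, as already signalled by the footnote on $\mf{gl}(\infty)$ in the excerpt, is analytic rather than algebraic: one must fix a completion of $\mf{gl}(\infty)$ on which the cocycle arising from normal ordering is well-defined and exponentiates to a group, and one must verify that the orbit map is well-defined on this completion. Once these conventions are pinned down (for instance via the standard finite-band or trace-class conditions), the identification of the orbit of $1$ with the space of KP tau-functions reduces to the Pl\"ucker characterisation above. Since for the purposes of this paper the relevant orbits are always the ones produced explicitly by hypergeometric constructions, the convergence issues will not cause difficulties downstream.
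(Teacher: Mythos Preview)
The paper does not actually prove this theorem: it is stated as a citation of Sato~\cite{Sato82} and used as background, with no proof or even proof sketch provided. So there is nothing to compare your proposal against within the paper itself.

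That said, your outline is a correct and standard account of the Sato/Date--Jimbo--Kashiwara--Miwa argument: translate to the fermionic side via the boson-fermion correspondence, identify the Hirota bilinear identity with the Pl\"ucker relation $\sum_k \psi_k\tau\otimes\psi_k^*\tau=0$, and then argue that the solutions of this relation are exactly the $\widehat{GL(\infty)}$-orbit of the vacuum (one inclusion from $[\Omega, g\otimes g]=0$, the other from completing a semi-infinite frame to an admissible basis). Your acknowledgement of the analytic subtlety in choosing a completion of $\mf{gl}(\infty)$ matches the paper's own agnosticism on this point (its footnote explicitly declines to fix a decay condition, since the constructions used later guarantee convergence). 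If anything, you could be slightly more careful in the reverse direction: not every element of the abstract group acts on the Fock space, so one should either restrict to the big cell or argue via a limiting/LU-decomposition procedure; but as a proof plan this is entirely adequate.
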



\subsection{Single Hodge integrals}\label{RecallHodge}

In~\cite{KazarianHodge}, Kazarian considered the generating function for single Hodge integrals,
\begin{equation}
	F_\textup{H} (u; T_0,T_1,T_2,\dotsc ) \coloneqq \sum_{g,n} \frac{1}{n!} \sum_{d_1, \dotsc, d_n \geq 0} \int_{\Mgn} \Lambda (-u^2)\prod_{i=1}^n \psi_i^{d_i} T_{d_i}\,,
\end{equation}
and showed that its exponent, $ Z_\textup{H} \coloneqq \exp (F_\textup{H}) $, is a tau-function for the KP hierarchy, after a certain change of coordinates. Explicitly, this change of coordinates is given as follows: define
\begin{equation}\label{KazarianRecursion}
	D = (u+ z)^2 z \frac{\del}{\del z}\,.
\end{equation}
Then we define the $ T_d $ in terms of other coordinates $ q_k $ by the linear correspondence
\begin{align}\label{KazarianTDef}
	q_k &\leftrightarrow z^k\,,  & T_d \leftrightarrow  D^d  z\,.
\end{align}
The proof consists of three steps, and makes essential use of the ELSV formula \cite{ELSV} to transform this generating function into a generating function of Hurwitz numbers.\par
The first step, \cite[Theorem 2.2]{KazarianHodge}, is the observation that the generating function for single simple Hurwitz numbers is a tau-tunction for the KP hierarchy. This is a well-known result, see \cite{Okounkov00}. In fact, the single simple Hurwitz generating function can be obtained from the trivial $ \tau $-function $ 1$ by the action of two very explicit elements of the Lie group associated to $ \widehat{\mf{gl}(\infty )} $. 
The second step, \cite[Theorem 2.3]{KazarianHodge}, uses the ELSV formula to rewrite the Hurwitz generating function (after subtracting the unstable geometries) as a generating function for single Hodge integrals. This introduces certain combinatorial factors, that suggest a certain change of coordinates, which is encoded by the equation $ X(z) = \frac{z}{1+\beta z} e^{-\frac{\beta z}{1+\beta z}}$.  After this change of coordinates, we obtain $ Z_\textup{H} $, viewed as a function in $ q$'s.\par
The third step, \cite[Theorem 2.5]{KazarianHodge} shows that a certain class of coordinate changes preserves solutions of the KP hierarchy, after they are modified with a quadratic function. In essence, this coordinate change is given infinitesimally by the flow along the differential part of an $ A \in \widehat{\mf{gl}(\infty )}$, whose polynomial part is exactly the added quadratic function. In this specific case, this quadratic function is exactly the $ (0,2)$ part of the Hurwitz generating function.\par
\vspace{12pt}
In this paper, we will generalise this proof scheme to a more general setting. We will start from a general hypergeometric tau-function in the sense of \cref{OScorrelators} below, corresponding to the first point of the proof. \par
We obtain a change of coordinates coming from this formalism that can always be completed to an automorphism of KP when correcting with the $H_{0,2}$ of \cref{OSUnstable}, without any further assumption, corresponding to the third point of the proof.\par
If we restrict to the class of hypergeometric tau-functions satisfying topological recursion, we can use the correspondence between topological recursion and cohomological field theories of  Eynard and Dunin-Barkowski-Orantin-Shadrin-Spitz~\cite{Eyn14,DOSS14}, which generalises the ELSV formula and hence gives the second step.\par
In the particular case of triple Hodge integrals, the role of the ELSV formula is taken by the Mari\~no-Vafa formula.  For explanations on all the required notions and notation, see the following sections.

\subsection{Hypergeometric KP tau-functions and topological recursion}\label{HypGeoIntro}

An important class of KP tau-functions is given by the hypergeometric tau-functions \cite{KMMM95,OS01a,OS01b}, for which we will use the results and notation of \cite{BDKS20}. In two large families of examples, these satisfy Eynard-Orantin topological recursion~\cite{EO07} (or its generalisation to non-simple ramification given by Bouchard-Eynard~\cite{BE13}), which we define first. We will confine ourselves to the case of rational spectral curves, as this is the appropriate setting for the Hurwitz-type problems covered.

\begin{definition}[\cite{EO07,BE13}]
	A \emph{rational spectral curve} is a quadruple $\mc{C} = (\Sigma = \P^1, dx,dy,B)$, where $ dx$ and $ dy$ are meromorphic one-forms on $ \Sigma$ with no common zeroes, only simple poles of $dx$, and without poles of $ dy$ at zeroes of $dx$, and $ B = B(z_1,z_2) =\frac{dz_1 \, dz_2}{(z_1-z_2)^2}$ is a symmetric $(1,1)$-form on $ \Sigma \times \Sigma$. Write $ R \subset \Sigma$ for the set of zeroes of $ dx$, and $r_a$ for the order of vanishing of $ dx$ at $ a \in R$.\par
	On a rational spectral curve, define a set of symmetric multidifferentials $ \{ \omega_{g,n} \}_{g\geq 0, n\geq 1}$ on $ \Sigma^n$ via \emph{topological recursion} as follows: first, define the unstable cases by $ \omega_{0,1} \coloneqq y dx$ (this need only be defined locally near the $a_i$ using any primitive of $y$) and $ \omega_{0,2} \coloneqq B$. Then, for $ 2g-2+(n+1)>0$, the stable range, define
	\begin{equation}\label{TRrecursionequationLocal}
		\omega_{g,n+1} (z_{[n]}, z_{n+1}) \coloneqq \sum_{a \in R} \sum_{\{ 0\} \subsetneq I \subset \{ 0, \dotsc,r_a-1 \} } \Res_{z = a} \frac{\int_a^{z} \omega_{0,2}(\mathord{\cdot}, z_{n+1})}{\prod_{i=2}^m \big( \omega_{0,1} (z) - \omega_{0,1}(\sigma_a^i(z) )\big) } \mc{W}_{g,|I|+1,n} ( \sigma_a^{I} ( z ); z_{[n]} )\,,
	\end{equation}
	where $ \sigma_a$ is a generator of the local deck transformations of a primitive of $dx$ at $a$, and
	\begin{equation}
		\mc{W}_{g,m,n}(\zeta_{[m]};z_{[n]}) \coloneqq \sum'_{\substack{\mu \vdash [m] \\ \bigsqcup_{k=1}^{l(\mu )} N_k = [n] \\ \sum g_k=g+l( \mu )-n}} \prod_{k=1}^{l(\mu )} \omega_{g_k,|\mu_k|+|N_k|}(\zeta_{\mu_k},z_{N_k})
	\end{equation}
	where the prime on the summation means exclusion of any \( (g_k, |\mu_k| + |N_k|) = (0,1)\).
\end{definition}

\begin{remark}
	Often, the definition of spectral curves involves functions $ x$ and $ y$, in stead of their derivatives. However, these functions may not be defined globally on $x$, e.g. they may -- and in this paper will -- contain logarithmic terms. As most of the theory of topological recursion (with the notable exception of the global topological recursion of Bouchard-Eynard~\cite{BE13}) only depends on the derivatives, I have chosen to use this as a definition.
\end{remark}

\begin{theorem}[{\cite{BEO15,BS17}}]\label{ALE}
	Let $ \mc{C}$ be a rational spectral curve with simple zeroes of $dx$. A collection $ \{ \omega_{g,n} \}_{g\geq 0, n\geq 1}$ with $ \omega_{0,1} = y dx$ and $ \omega_{0,2} = B$ satisfies topological recursion if and only if the following hold:
	\begin{itemize}
		\item \textup{Meromorphicity:} For $ 2g-2+n > 0$, $ \omega_{g,n}$ extends to a meromorphic form on $ \Sigma^n$;
		\item \textup{Linear loop equation:} For any $ g$, $n$, and $a \in R$, 
			\begin{equation}
				\omega_{g,n+1}(z, z_{\llbracket n\rrbracket}) + \omega_{g,n+1}(\sigma_a(z), z_{\llbracket n \rrbracket})
			\end{equation}
			is holomorphic near $ z = a $ and has a simple zero at $ z = a$;
		\item \textup{Quadratic loop equation:} For any $ g$, $n$, and $a \in R$,
			\begin{equation}
				\omega_{g-1,n+2}(z, \sigma_a(z),z_{\llbracket n\rrbracket}) 
				+ \sum_{\substack{g_1 + g_2 = g\\I \sqcup J = \llbracket n\rrbracket}} \omega_{g_1,|I|_1+1}(z, z_I) \omega_{g_2,|J|+1}(\sigma_a(z),z_J)
			\end{equation}
			is holomorphic near $ z = a $ and has a double zero at $ z = a$;
		\item \textup{Projection property:} For $2g-2+n>0$, 
			\begin{equation}
				\omega_{g,n}(z_{\llbracket n\rrbracket}) = \sum_{a_1, \dotsc, a_n \in R} \Big( \prod_{j=1}^n \Res_{\zeta_j = a_j} \int_{a_j}^{z_j} \omega_{0,2}(z_j, \mathord{\cdot}) \Big) \omega_{g,n} (\zeta_{\llbracket n\rrbracket})\,.
			\end{equation}
	\end{itemize}
\end{theorem}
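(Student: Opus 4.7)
The plan is to prove both implications separately, with both directions relying on the residue structure of \eqref{TRrecursionequationLocal} at the simple ramification points.

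For the forward direction I would induct on $2g-2+n$. Meromorphicity is immediate: the recursion produces $\omega_{g,n+1}$ as a finite sum of residues of forms that are meromorphic on $\Sigma$ by the induction hypothesis, and the $z_{n+1}$-dependence enters only through $\omega_{0,2}(\mathord{\cdot}, z_{n+1})$, which is meromorphic. The projection property then follows once one knows, inductively, that all poles of $\omega_{g,n}$ with $2g-2+n>0$ lie at ramification points, by expanding $\int_a^{\zeta_j}\omega_{0,2}$ against the residues. For the loop equations, the key observation is that at a simple zero of $dx$ the local deck group is $\Z/2$, the kernel in \eqref{TRrecursionequationLocal} is $\sigma_a$-antisymmetric in the integration variable, and $\omega_{0,1}(z)-\omega_{0,1}(\sigma_a(z))$ vanishes to order one at $a$. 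Tracking the $\sigma_a$-symmetric and antisymmetric parts of the recursion and feeding in the induction hypothesis gives the simple zero of the linear loop equation and the double zero of the quadratic one.

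For the reverse direction, the starting point is the projection property applied in the single variable $z_{n+1}$, which rewrites
\begin{equation*}
\omega_{g,n+1}(z_{\llbracket n\rrbracket}, z_{n+1}) = \sum_{a \in R} \Res_{z = a} \Big( \int_a^z \omega_{0,2}(\mathord{\cdot}, z_{n+1}) \Big)\, \omega_{g,n+1}(z, z_{\llbracket n\rrbracket}).
\end{equation*}
At each $a$, I would split $\omega_{g,n+1}(z, z_{\llbracket n\rrbracket})$ into $\sigma_a$-symmetric and antisymmetric parts in $z$. By the linear loop equation, the symmetric part is holomorphic near $a$ with a simple zero there, so it contributes nothing to the residue against a kernel with at most a double pole. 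For the antisymmetric part, I would multiply and divide by $\omega_{0,1}(z)-\omega_{0,1}(\sigma_a(z))$: the denominator combines with the projection kernel to produce the TR kernel, while the antisymmetric product $(\omega_{0,1}(z)-\omega_{0,1}(\sigma_a(z)))\cdot \omega_{g,n+1}^{\textup{anti}}(z,z_{\llbracket n\rrbracket})$ is $\sigma_a$-invariant and, by the quadratic loop equation, agrees up to a function with a double zero at $a$ with $\omega_{g-1,n+2}(z,\sigma_a(z),z_{\llbracket n\rrbracket}) + \sum \omega_{g_1,|I|+1}(z,z_I)\,\omega_{g_2,|J|+1}(\sigma_a(z),z_J)$. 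Since such a double-zero term contributes nothing to the residue once divided by $\omega_{0,1}(z)-\omega_{0,1}(\sigma_a(z))$, this recovers exactly \eqref{TRrecursionequationLocal}.

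The main obstacle is the residue bookkeeping in the reverse direction: one must verify that the unstable $(0,1)$ and $(0,2)$ contributions, i.e.\ the diagonal $\omega_{0,2}(z,\sigma_a(z))$ term and the $\omega_{0,1}$ terms, cancel cleanly against the change of kernel and the denominator, so that the primed sum defining $\mc{W}_{g,2,n}$ appears exactly. The restriction to simple zeroes of $dx$ is what keeps this tractable, since $\sigma_a$ is an honest involution and only two sheets are involved; the general statement requires a substantially more delicate analysis of the full local Galois group and higher-order loop equations, which is why Bouchard-Eynard global topological recursion receives a separate treatment elsewhere.
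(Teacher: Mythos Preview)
The paper does not prove this theorem: it is stated with the attribution \cite{BEO15,BS17} and no proof environment follows. It is quoted as a known characterisation from the literature and used as a black box, so there is nothing in the paper to compare your proposal against.

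That said, your sketch is broadly the standard argument from those references. The forward direction by induction on $2g-2+n$ is correct in outline; in particular, the observation that the recursion kernel is $\sigma_a$-odd and that $\omega_{0,1}(z)-\omega_{0,1}(\sigma_a(z))$ has a simple zero at $a$ is exactly what drives the loop equations. For the reverse direction, your strategy of applying the projection property in one variable, splitting into $\sigma_a$-even and odd parts, and then using the linear and quadratic loop equations to reconstruct the recursion kernel is also the right one. The residue bookkeeping you flag as the main obstacle is genuine but not deep: the $(0,1)$ terms are absorbed into the denominator by construction, and the diagonal $\omega_{0,2}(z,\sigma_a(z))$ term cancels against the regularisation of the Bergman kernel, leaving precisely the primed sum in $\mc{W}_{g,2,n}$. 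Your remark that simple ramification keeps the argument tractable because $\sigma_a$ is an involution is accurate and matches the scope of the cited theorem.
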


If only the meromorphicity and linear and quadratic loop equations hold, the problem is said to satisfy blobbed topological recursion, cf. \cite{BS17}. In this case, the $ \omega_{g,n}$ are determined by the spectral curve along with their holomorphic parts at ramification points.

One important reason to consider topological recursion is that the $\omega_{g,n}$ will often encode enumerative invariants in their Taylor series expansion around a given point of the spectral curve in a given coordinate. For us, this is also the case, as we consider the class given by the following theorem:

\begin{theorem}[\cite{BDKS20,BDKS20a}]\label{OScorrelators}
	Consider two formal power series
	\begin{equation}\label{psiandy}
		\hat{\psi} (\hbar^2,y) \coloneqq \sum_{k=1}^\infty \sum_{m=0}^\infty c_{k,m} y^k \hbar^{2m}\,, \qquad \hat{y}(\hbar^2,z) \coloneqq  \sum_{k=1}^\infty \hat{y}_k(\hbar^2) z^k \coloneqq \sum_{k=1}^\infty \sum_{m=0}^\infty s_{k,m}z^k\hbar^{2m}  \,,
	\end{equation}
	and their associated \emph{hypergeometric KP tau-function} or \emph{Orlov-Scherbin partition function}
	\begin{equation}\label{DefHypGeo}
		Z (\underline{p}) = e^{F(\underline{p})} = \sum_{\nu \in \mc{P}} \exp \Big( \sum_{\square \in \nu} \hat{\psi} (\hbar^2,-\hbar c_\square )\Big) s_\nu (\underline{p} ) s_\nu \big( \big\{ \frac{\hat{y}_k(\hbar^2)}{\hbar} \big\} \big)\,.
	\end{equation}
	Define
	\begin{equation}\label{XfromKP}
	\begin{alignedat}{3}
		\psi (y) 
		&
		\coloneqq \hat{\psi}(0,y)\,,  \qquad
		&
		y(z) 
		&
		\coloneqq \hat{y}(0,z)\,, \qquad
		&
		x(z) 
		&
		\coloneqq \log z - \psi(y(z))\,, 
		\\
		X(z) 
		&
		\coloneqq e^{x(z)}\,,
		&
		D 
		&
		\coloneqq \frac{\del}{\del x} \,, 
		&
		Q 
		&
		\coloneqq z \frac{d x}{d z}
	\end{alignedat}
	\end{equation}
	and write
	\begin{equation}\label{HnDef}
		H_n \coloneqq \sum_{k_1, \dotsc, k_n = 1}^\infty \frac{\del^n F}{\del p_{k_1} \dotsb \del p_{k_n}} \bigg|_{p=0} X_1^{k_1} \dotsb X_n^{k_n}\,.
	\end{equation}
	Then these can be decomposed as
	\begin{equation}\label{HnGenusDecomp}
		H_n = \sum_{g=0}^\infty \hbar^{2g-2+n} H_{g,n}\,,
	\end{equation}
	with $ H_{g,n} $ independent of $ \hbar$, and
	\begin{equation}\label{OSUnstable}
		DH_{0,1}(X(z)) = y(z)\,, \qquad H_{0,2}(X(z_1),X(z_2)) = \log \Big( \frac{z_1^{-1} - z_2^{-1}}{X_1^{-1} - X_2^{-1}} \Big)\,.
	\end{equation}
	If moreover $\frac{d\psi (y)}{dy} \big|_{y = y(z)}$ and $ \frac{d y(z)}{dz}$ have analytic continuations to meromorphic functions in $ z$ and all coefficients of positive powers of $\hbar^2$ in $ \hat{\psi}(\hbar^2,y(z))$ and $ \hat{y}(\hbar^2,z)$ are rational functions of $z$ whose singular points are disjoint from the zeroes of $ dx$, then the $n$-point differentials
	\begin{equation}
		\omega_{g,n} \coloneqq d_1 \dotsb d_n H_{g,n} + \delta_{g,0}\delta_{n,2} \frac{dX_1 \, dX_2}{(X_1-X_2)^2}
	\end{equation}
	can be extended analytically to $ (\P^1)^n$ as global rational forms, and the collection of $n$-point differentials satisfies meromorphicity and the linear and quadratic loop equations, i.e. blobbed topological recursion, for the curve $ (\P^1, dx(z), dy(z),B = \frac{dz_1\, dz_2}{(z_1-z_2)^2})$.\par
	Finally, if $ \hat{\psi} $ and $ \hat{y}$ belong to one of the two families
	\begin{align*}
		&
		\text{Family I}
		&
		\hat{\psi}(\hbar^2,y)
		&
		= \mc{S}(\hbar\del_y ) P_1(y) + \log \Big(\frac{P_2(y)}{P_3(y)}\Big)\,;
		&
		\hat{y}(\hbar^2, z) 
		&
		= \frac{R_1(z)}{R_2(z)}\,,
		\\
		&
		\text{Family II}
		&
		\hat{\psi}(\hbar^2,y)
		&
		= \alpha y\,;
		&
		\hat{y}(\hbar^2, z) 
		&
		= \frac{R_1(z)}{R_2(z)} +\mc{S} (\hbar z \del_z)^{-1} \log \Big(\frac{R_3(z)}{R_4(z)}\Big)\,,
	\end{align*}
	where $ \alpha \in \C^\times$ and the $ P_i$ and $R_j$ are arbitrary polynomials such that $ \psi(y)$ and $ y(z)$ are non-zero, but vanishing at zero, and no singular points of $y$ are mapped to branch points by $x$, then the $n$-point differentials also satisfy the projection property, and hence topological recursion, for the curve above.
\end{theorem}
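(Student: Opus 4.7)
The plan is to use the fermionic/bosonic approach to hypergeometric KP tau-functions. First, rewrite the generating function \eqref{DefHypGeo} as a vacuum expectation value in fermionic Fock space: by the Frobenius character formula and the boson-fermion correspondence,
\begin{equation*}
    Z(\underline{p}) = \langle 0 | e^{J_+(\underline{p})}\, \hat{\mc{D}}\, e^{J_-(\underline{\hat{y}}/\hbar)} | 0 \rangle,
\end{equation*}
where $\hat{\mc{D}}$ is diagonal in the Maya diagram basis with eigenvalue $\exp(\sum_{\square \in \nu} \hat{\psi}(\hbar^2, -\hbar c_\square))$ on the partition $\nu$. Writing $\hat{\mc{D}}$ as the exponential of a normally ordered bilinear in fermion modes sets up the direct computation of connected correlators.

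Second, I would extract $H_n$ by applying Wick's theorem to the mixed derivatives $\del^n \log Z / (\del p_{k_1} \dotsb \del p_{k_n})|_{p=0}$ and summing the resulting geometric series in the $X_i$. This yields a closed form of the schematic type
\begin{equation*}
    H_n(X_1, \dotsc, X_n) = \sum_{\sigma \in \mf{C}_n} \prod_{i=1}^n \Res_{\zeta_i = 0} \bigl( K_\hbar(\zeta_i, \zeta_{\sigma(i)})\, \frac{d\zeta_i}{X_i - X(\zeta_i)} \bigr) + \text{unstable},
\end{equation*}
where $\mf{C}_n$ ranges over connected chain/cycle data and $K_\hbar$ is a kernel built from $\mc{S}(\hbar \del)\hat{\psi}$ along $X = X(z)$. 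After the change of variables $X = X(z)$, the kernel is rational up to $\mc{S}(\hbar \del)$-correction operators; expanding these in $\hbar^2$ produces the genus decomposition \eqref{HnGenusDecomp}, with each $H_{g,n}$ a polynomial expression in derivatives of rational functions.

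Third, the unstable contributions \eqref{OSUnstable} follow by direct evaluation: $DH_{0,1}(X(z)) = y(z)$ is the leading classical limit of the one-point function and reduces to the Lambert-type identity $z \del_z x = 1 - z\psi'(y(z))y'(z)$; and $H_{0,2}$ is the difference between the classical propagator on the $z$-side and the Cauchy kernel $dX_1\, dX_2/(X_1-X_2)^2$, which evaluates to the stated logarithm. For meromorphicity and the linear and quadratic loop equations, under the assumed rationality of $\psi'(y(z))$, $y'(z)$, and the $\hbar$-corrections, each summand in the closed form is a finite combination of rational forms on $(\P^1)^n$; the loop equations then reduce to the symmetry of $K_\hbar$ under the local involution $\sigma_a$ combined with the vanishing of residues at non-branch points.

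The main obstacle is the projection property for Families I and II, which upgrades blobbed TR to full TR; the difficulty is to rule out spurious holomorphic contributions at the zeroes of $dx$. The plan is to exploit the very restrictive form of $\hat{\psi}$ and $\hat{y}$ in these families: the operator $\hat{\mc{D}}$ factors through a finite product of vertex-operator-type ingredients, giving tight control on the pole locations of the $n$-point correlators. One would argue that $\omega_{g,n}/\prod dx_i$ has no poles at $z_j \in R$ beyond those forced by the polar part of $K_\hbar$, which by \cref{ALE} is equivalent to the projection property. The restriction to polynomial $P_i, R_j$ is essential here: it guarantees that the analytic structure of $\hat{\mc{D}}$ is compatible with the behaviour at ramification points and that no extra holomorphic contributions creep in from the auxiliary rational data.
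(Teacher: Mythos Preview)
This theorem is not proved in the paper: it is stated as a cited result from \cite{BDKS20,BDKS20a}, and the paper uses it as a black box. There is therefore no ``paper's own proof'' to compare your proposal against.

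That said, your outline is a reasonable high-level summary of the strategy actually used in the BDKS papers: the vacuum expectation value representation, Wick-type extraction of connected $n$-point functions, the closed formulae for $H_n$ in terms of operators built from $\mc{S}(\hbar\partial)$ acting on rational data, and then the separate analysis of the projection property for the two special families. Two caveats. First, the ``schematic'' residue formula you write is not quite the shape of the actual BDKS closed formulae; their expressions involve iterated applications of operators like $\mc{S}(\hbar z\partial_z)$ and sums over labelled trees/graphs rather than a single kernel $K_\hbar$ integrated against a Cauchy kernel, and the genus expansion emerges from a careful combinatorial reorganisation rather than a naive $\hbar^2$-expansion of one kernel. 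Second, your account of the projection property is essentially a restatement of what needs to be shown rather than an argument: ruling out the holomorphic part at ramification points is the genuinely hard step in \cite{BDKS20a}, and it requires a delicate analysis specific to each family (in particular, for Family~II it uses a nontrivial rewriting of the correlators and a deformation argument). Saying that ``$\hat{\mc{D}}$ factors through a finite product of vertex-operator-type ingredients'' does not by itself control holomorphic contributions at the zeroes of $dx$.
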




\begin{remark}\label{ConstantTermpsi}
	It is possible to allow for constant terms in $ \hat{\psi}$ in \cref{psiandy}, but using quasihomogeneity of the $ s_\nu$ in \cref{DefHypGeo}, this can be absorbed in a recaling of the argument of $ \hat{y}$. From the spectral curve point of view, this follows from the fact that the two curves
	\begin{equation}
		\begin{cases} 
			X_1(z) 
			&
			= z e^{- \psi \circ y(z) + \log a} = az e^{-\psi \circ y (z)} 
			\\
			y_1 (z) 
			&
			= y(z) 
		\end{cases} 
		\qquad \text{and} \qquad 
		\begin{cases}
			X_2(z') 
			&
			= z' e^{-\psi \circ y(\frac{z'}{a})}
			\\
			y_2(z')
			&
			= y(\frac{z'}{a})
		\end{cases}
	\end{equation}
	can be identified via $ z' = az$.
\end{remark}

\begin{remark}
	We will consistently use the symbol $x$ for the function which is part of the spectral curve data and $ X$ for its exponential, which is the expansion parameter for this class of Hurwitz problems.
\end{remark}

We will need different parts of this theorem for the different parts of the proof. In particular, topological recursion is needed to obtain intersection numbers.

There is one small generalisation that can be made in \cref{OScorrelators} by using a homogeneity property, using that \cref{DefHypGeo} is a KP tau-function identically in $ \hbar$.

\begin{lemma} \label{TorusAction}
	There is a $\C^\times$ action on pairs $ (\hat{\psi}, \hat{y}) $ induced by rescaling of $\hbar$ in \cref{DefHypGeo} as follows:
	\begin{equation}
		\lambda \cdot \big(\hat{\psi} (\hbar^2, y), \hat{y}(\hbar^2, z) \big) = \big( \hat{\psi} (\lambda^{-2} \hbar^2,  \lambda^{-1}y),\lambda \hat{y} (\lambda^{-2} \hbar^2, z) \big)\,.
	\end{equation}
	This acts on $ H_{g,n}$ as $ \lambda \cdot H_{g,n} = \lambda^{2-2g-n}H_{g,n}$ and on the spectral curve by
	\begin{equation}
		\lambda \cdot (\P^1, dx(z), dy(z), B) = (\P^1, dx(z), \lambda dy(z), B)\,.
	\end{equation} 
	Hence it is compatible with the homogeneity of topological recursion of e.g. \cite[Section 4.1]{EO09}.
\end{lemma}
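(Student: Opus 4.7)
The plan is to verify the lemma by direct substitution in the defining formula \cref{DefHypGeo} and then tracing through the definitions in \cref{XfromKP,HnGenusDecomp}. The key observation is that the tau-function is a KP tau-function for all choices of $\hbar$, so rescaling $\hbar$ must preserve all the structural properties while rearranging data between $(\hat\psi, \hat y)$ and the genus counting parameter.

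First I would substitute $\hbar \mapsto \lambda^{-1}\hbar$ in \cref{DefHypGeo} and read off which transformation of $(\hat\psi, \hat y)$ reproduces the resulting partition function at the original $\hbar$. Inside the exponential, $\hat\psi(\hbar^2, -\hbar c_\square)$ becomes $\hat\psi(\lambda^{-2}\hbar^2, -\lambda^{-1}\hbar c_\square)$, which equals $\tilde\psi(\hbar^2, -\hbar c_\square)$ precisely when $\tilde\psi(\hbar^2, y) = \hat\psi(\lambda^{-2}\hbar^2, \lambda^{-1} y)$. Similarly, the Schur argument $\hat y_k(\hbar^2)/\hbar$ becomes $\lambda \hat y_k(\lambda^{-2}\hbar^2)/\hbar$, forcing $\tilde y(\hbar^2, z) = \lambda \hat y(\lambda^{-2}\hbar^2, z)$. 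This reproduces the stated formula for the action, and the group law $\lambda_1 \cdot (\lambda_2 \cdot -) = (\lambda_1\lambda_2)\cdot -$ follows from composition of the $\hbar$-rescalings.

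Next I would trace the action through the derived data of \cref{XfromKP}. Setting $\hbar = 0$, one has $\tilde\psi(y) = \psi(\lambda^{-1}y)$ and $\tilde y(z) = \lambda y(z)$, whence $\tilde\psi \circ \tilde y (z) = \psi(y(z))$. Therefore $x$, $X$, $Q$, and $D$ are unchanged, so $dx$ is unchanged, while $d\tilde y = \lambda\, dy$ and $B$ is unchanged. This gives the claimed effect on the spectral curve. For the effect on $H_{g,n}$, since the free energy $\tilde F$ of the transformed partition function equals $F$ evaluated at $\lambda^{-1}\hbar$, and since $X$ is unaffected, definition \cref{HnDef} gives $\tilde H_n(X_1, \ldots, X_n; \hbar) = H_n(X_1, \ldots, X_n; \lambda^{-1}\hbar)$. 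Comparing the genus expansions \cref{HnGenusDecomp} coefficient by coefficient in $\hbar$ yields $\tilde H_{g,n} = \lambda^{2-2g-n} H_{g,n}$, as required.

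Finally, the compatibility with topological recursion homogeneity is immediate: under $y \mapsto \lambda y$ with $x$ fixed, one has $\omega_{0,1} \mapsto \lambda \omega_{0,1}$ and $\omega_{0,2}$ unchanged, which by the standard homogeneity of the recursion \cref{TRrecursionequationLocal} scales $\omega_{g,n}$ by $\lambda^{2-2g-n}$, exactly matching the scaling of $H_{g,n}$ derived above. There is no serious obstacle in this lemma; it is a bookkeeping exercise, and the only point that requires a little care is making sure the rescalings inside $\hat\psi$ and $\hat y$ are applied consistently to both arguments so that $\tilde\psi \circ \tilde y$ is invariant, which is what makes $X$ and hence the natural expansion variables survive the action.
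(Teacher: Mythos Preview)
Your argument is correct and is precisely the intended verification; the paper states this lemma without proof, treating it as an immediate consequence of the definitions, and your direct substitution in \cref{DefHypGeo} followed by tracing through \cref{XfromKP,HnDef,HnGenusDecomp} is exactly the bookkeeping one has to do. There is nothing to add.
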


\begin{corollary} \label{ExtendedFamilyII}
	Family II of \cref{OScorrelators} can be extended to 
	\begin{equation}
		\hat{\psi}(\hbar^2,y)
		= \alpha y\,;
		\qquad
		\hat{y}(\hbar^2, z) 
		= \frac{R_1(z)}{R_2(z)} +\lambda \mc{S} (\lambda^{-1} \hbar z \del_z)^{-1} \log \Big(\frac{R_3(z)}{R_4(z)}\Big)\,,
	\end{equation}
	with the same conditions on $ \alpha $, the $ P_i$, and the $R_j$ as before, and $ \lambda \in \C^\times $.
\end{corollary}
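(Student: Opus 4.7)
The plan is to deduce the extended form by applying \cref{TorusAction} to a pair in the standard Family II. Given an extended pair $(\hat{\psi}, \hat{y})$ with parameters $\alpha, \lambda \in \C^\times$ and polynomials $R_1, R_2, R_3, R_4$, I would define
\begin{align*}
	\hat{\psi}_0(\hbar^2, y) &\coloneqq \hat{\psi}(\lambda^2\hbar^2, \lambda y) = (\alpha\lambda)\, y\,, \\
	\hat{y}_0(\hbar^2, z) &\coloneqq \lambda^{-1} \hat{y}(\lambda^2 \hbar^2, z) = \frac{\lambda^{-1}R_1(z)}{R_2(z)} + \mc{S}(\hbar z \del_z)^{-1} \log\Big(\frac{R_3(z)}{R_4(z)}\Big)\,,
\end{align*}
so that $(\hat{\psi}, \hat{y}) = \lambda \cdot (\hat{\psi}_0, \hat{y}_0)$ in the sense of \cref{TorusAction}. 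This is manifestly of the form of standard Family II, with new parameter $\alpha_0 = \alpha\lambda \in \C^\times$, rational part $\lambda^{-1} R_1/R_2$ (which is still a ratio of polynomials), and the same $R_3, R_4$.

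Next I would verify that the hypotheses of \cref{OScorrelators} for standard Family II hold for $(\hat{\psi}_0, \hat{y}_0)$. Since $y_0(z) = \lambda^{-1} y(z)$, the non-vanishing of $y_0$ and its vanishing at $z = 0$ are equivalent to the same conditions on $y$. The corresponding $x_0(z) = \log z - \psi_0(y_0(z)) = \log z - \alpha \lambda \cdot \lambda^{-1} y(z) = \log z - \alpha y(z) = x(z)$ is unchanged, so the branch points of $x_0$ coincide with those of $x$; similarly the singular points of $y_0$ and $y$ coincide, so the condition that no singular point of $y_0$ maps to a branch point of $x_0$ is identical to the corresponding condition on the extended pair. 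Hence \cref{OScorrelators} applies to $(\hat{\psi}_0, \hat{y}_0)$ and the associated $n$-point differentials satisfy topological recursion on the spectral curve $(\P^1, dx_0(z), dy_0(z), B)$.

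Finally, \cref{TorusAction} asserts precisely that the $\lambda$-action rescales the spectral curve by $dy_0 \mapsto \lambda\, dy_0$ and rescales $H_{g,n}$ by $\lambda^{2-2g-n}$, and that this rescaling is compatible with the homogeneity of topological recursion recorded in \cite[Section 4.1]{EO09}. Therefore topological recursion for $(\hat{\psi}_0, \hat{y}_0)$ transfers to topological recursion for $\lambda \cdot (\hat{\psi}_0, \hat{y}_0) = (\hat{\psi}, \hat{y})$ on the curve $(\P^1, dx(z), \lambda\, dy_0(z), B) = (\P^1, dx(z), dy(z), B)$, proving the corollary.

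The only real subtlety is bookkeeping: one must check that the substitution $\hbar \mapsto \lambda\hbar$ inside $\mc{S}(\hbar z \del_z)^{-1}$ reproduces exactly the operator $\lambda \mc{S}(\lambda^{-1}\hbar z \del_z)^{-1}$ after the overall rescaling by $\lambda$ in $\hat{y}_0 \mapsto \lambda \hat{y}_0$, which is a direct computation from the definition of $\mc{S}$; beyond this there is no obstacle, as the compatibility of the $\C^\times$-action with topological recursion is exactly the content of \cref{TorusAction}.
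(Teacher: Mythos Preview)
Your proposal is correct and follows exactly the approach the paper takes: apply the $\C^\times$-action of \cref{TorusAction} to absorb $\lambda$ into $\alpha$ and the $R_i$, reducing to a pair in the original Family II. The paper's proof is a single sentence to this effect; your write-up simply spells out the bookkeeping in more detail.
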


\begin{proof}
	The constant $ \lambda $ has been absorbed in $ \alpha $ and the $ R_i$ where possible.
\end{proof}

A similar argument for Family I does not lead to an extended class, as this family is already invariant under the torus action.

\subsection{Topological recursion and cohomological field theories}\label{TRCohFT}

Topological recursion is strongly related to intersection theory of the moduli spaces of curves: there is a quite general correspondence between spectral curves and certain coherent collections of intersection classes in the moduli spaces. These coherent collections are cohomological field theories, which were orignally defined by Kontsevich and Manin \cite{KM94} to axiomatise Gromov-Witten theory.

\begin{definition}[\cite{KM94}]
	Let $ V$ be a vector space with a non-degenerate bilinear form $\eta$ and a distinguished vector $ \1$.
	A \emph{cohomological field theory with flat unit (CohFT)} on $(V,\eta, \1)$ is a collection of maps
	\begin{equation}
		\Omega_{g,n} \colon V^{\otimes n} \to H^*(\Mgn )\,,
	\end{equation}
	for all $ g \geq 0$, $ n \geq 1$ such that $ 2g-2+n >0$, such that
	\begin{itemize}
		\item $ \Omega_{g,n}$ is $ \mf{S}_n$-equivariant with respect to simultaneous permutation of the factors and the marked points;
		\item with respect to the glueing maps
		\begin{equation}
			\rho \colon \overline{M}_{g-1,n+2} \to \Mgn \,, \qquad \sigma \colon \overline{M}_{g,|I|+1} \times \overline{M}_{h,|J|+1} \to \overline{M}_{g+h, |I \sqcup J|}\,,
		\end{equation}
		we get
		\begin{align*}
			\rho^*\Omega_{g,n} (v_{\llbracket n \rrbracket}) 
			&
			= \Omega_{g-1,n+2} (v_{\llbracket n \rrbracket} \otimes \eta^\dagger )\,,
			\\
			\sigma^* \Omega_{g+h,|I|+|J|} (v_I \otimes v_J) 
			&
			= \Omega_{g, |I|+1} \otimes \Omega_{h,|J|+1} (v_I \otimes \eta^{\dagger} \otimes v_J)\,;
		\end{align*}
		\item With respect to the forgetful maps
		\begin{equation}
			\pi \colon \overline{M}_{g,n+1} \to \Mgn\,,
		\end{equation}
		we have
		\begin{equation}
			\pi^* \Omega_{g,n} (v_{\llbracket n\rrbracket}) = \Omega_{g,n+1} (v_{\llbracket n\rrbracket} \otimes \1)\,.
		\end{equation}
	\end{itemize}
\end{definition}

There is a large group acting on the space of CohFTs, called the Givental group \cite{Giv01,Sha09,Tel12}. It consists of $ R(u) \in \Id + u \End (V) \llbracket u \rrbracket $ such that $ R(u) R^{\dagger} (-u) = \Id$. It is called the unit-preserving action in case one also considers CohFTs without unit.

\begin{theorem}[\cite{Eyn14,DOSS14,BKS20}]\label{EynardDOSS}
	Consider a compact rational spectral curve $ (\P^1, dx,dy, B)$, and define $ V^*$ to be the space of residueless meromorphic one-forms on $\P^1$ with poles only at $a \in R$ of order at most $ r_a +1$. Choose a basis $ \{ d\xi^j\}_{j \in J} $ of $V^*$ with dual basis $ e_j$ and define $ d\xi^j_k = (d \circ\frac{1}{dx})^k d\xi^j$. Then
	\begin{equation}\label{CorrelatorsAsIntersection}
		\omega_{g,n}(z_1,\dotsc, z_n) 
		=
		\sum_{j_1, \dotsc, j_n \in J} \int_{\Mgn} \Omega_{g,n} (e_{j_1} \otimes \dotsb \otimes e_{j_n}) \prod_{i=1}^n \sum_{k_i=0}^\infty \psi_i^{k_i} d\xi_{k_i}^{j_i} (z_i)\,,
	\end{equation}
	where $ \Omega$ is a cohomological field theory on $V$, given explicitly by acting on a direct sum of Witten $r_a$-spin classes for all ramification points of order $r_a$ by an $R$ determined by the spectral curve.
\end{theorem}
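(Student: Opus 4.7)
The plan is to match both sides of \eqref{CorrelatorsAsIntersection} by exploiting uniqueness: the topological recursion side is uniquely determined by the loop equations and projection property on the rational spectral curve (\cref{ALE}), while the CohFT side is uniquely determined by the Givental $R$-matrix action on the local building blocks, so it suffices to (i) identify the right local CohFT at each ramification point, (ii) produce the $R$-matrix from the global geometry, and (iii) check that the resulting intersection-number generating series satisfies the same analytic characterization as the $\omega_{g,n}$.

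First I would localize at a single ramification point $a \in R$ of order $r_a$. Choosing a local coordinate $\zeta$ on $\Sigma$ with $x - x(a) \sim \zeta^{r_a+1}$, the curve looks, to leading order, like the $(r_a+1,1)$-Airy/Bouchard-Eynard spectral curve, whose topological recursion correlators are known to compute Witten's $r_a$-spin intersection numbers against $\psi$-classes (this is the essential content of \cite{BKS20}, and reduces to Kontsevich-Witten when $r_a = 1$). This identifies the base cohomological field theory on the one-dimensional summand of $V$ attached to each ramification point as Witten's $r_a$-spin class, and fixes the ansatz for $\Omega_{g,n}$: take the direct sum of Witten $r_a$-spin CohFTs over $a \in R$ and act by a yet-to-be-determined Givental element $R(u)$.

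Second, I would construct $R(u)$ from the spectral curve via Laplace transforms along the steepest descent contours emanating from the ramification points, in the manner of \cite{Eyn14,DOSS14}. Concretely, the matrix elements of $R$ are read off by expanding $\int e^{-u^{-1}(x-x(a))} B(z,\cdot)$ and the forms $d\xi^j_k$ near each $a$ in the local coordinate, and the isotropy condition $R(u)R^{\dagger}(-u) = \Id$ translates into the bilinear symmetry of $B$. The basis $\{d\xi^j\}$ of residueless forms with poles at the $a$'s of order $\leq r_a+1$ plays the role of the flat basis in which the Givental action and the cotangent insertions $\psi_i^{k_i} d\xi^{j_i}_{k_i}(z_i)$ organize naturally, because these are precisely the forms that appear when applying the recursion kernel to insertions at stable leaves.

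The main obstacle, and thus the step I would spend most effort on, is verifying that the right-hand side of \eqref{CorrelatorsAsIntersection} actually satisfies the linear and quadratic loop equations plus the projection property of \cref{ALE}. The linear loop equation and projection are relatively formal, following from the fact that the CohFT expression is built only out of residueless meromorphic forms whose poles are controlled by the $r_a$-spin vanishing, together with the invariance of Witten's $r_a$-spin class under the local deck group. The quadratic loop equation is the heart of the matter: it is equivalent, after Laplace transform, to a bilinear relation among the $R$-matrix-dressed intersection numbers, and is proved by expanding both sides in the dual graph basis of $\Mgn$ and matching edge-by-edge, using the compatibility of the Givental action with the boundary restriction axioms of the CohFT. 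Once the three analytic properties are established on the intersection side, \cref{ALE} and its Bouchard-Eynard generalization force equality with $\omega_{g,n}$ genus-by-genus in the stable range, and the unstable terms $\omega_{0,1}, \omega_{0,2}$ match by direct comparison with $ydx$ and $B$ respectively.
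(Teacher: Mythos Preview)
This theorem is not proved in the paper: it is a cited result from \cite{Eyn14,DOSS14,BKS20}, recalled in \cref{TRCohFT} as a prerequisite for the main argument, and followed only by two remarks on conventions and the CohFT condition. There is therefore no ``paper's own proof'' to compare your proposal against.

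That said, your outline is a reasonable sketch of how the correspondence is established in the cited literature, with one caveat worth flagging. You propose to close the argument by invoking \cref{ALE} once the loop equations and projection property are checked on the intersection-number side; note however that \cref{ALE} as stated in this paper is only asserted for spectral curves with \emph{simple} zeroes of $dx$, whereas \cref{EynardDOSS} is stated for arbitrary ramification orders $r_a$. For the higher-order case you would need the Bouchard--Eynard extension (as in \cite{BE13,BKS20}), not \cref{ALE} directly, so your uniqueness step as written has a gap in that generality. Apart from this, the strategy of (i) identifying the local CohFT with Witten's spin class, (ii) extracting $R$ via Laplace transform of $B$ and $ydx$, and (iii) matching via the analytic characterization is indeed the shape of the argument in \cite{DOSS14}; the original \cite{Eyn14} proceeds somewhat differently, via a direct graph expansion of the recursion rather than through abstract loop equations.
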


\begin{remark}
	The results of \cite{Eyn14,BKS20} do not mention CohFTs, but rather give a relation between local spectral curves and intersection numbers. In order to obtain a CohFT, a condition is required, cf. \cite[Equation (17)]{LPSZ-Chiodo}. As noted in \cite[Section 2.6]{DNOPS19}, in case the spectral curve is compact and $dx $ is meromorphic with simple zeroes, this condition is satisfied by \cite[Appendix B]{Eyn14}. In case of higher order zeroes, the same holds, using \cite[Section 7.2.3]{BKS20}.
\end{remark}

\begin{remark}
	The space $ V^* $ is naturally related to the projection property of \cref{ALE}: the $ d\xi_k^j$ span the image of the projection operator. Its dimension, the rank of the CohFT, equals the degree of the divisor of zeroes of $dx$.\par
	There are two common choices for the basis  $ d\xi^j$, depending on a local coordinate $ \zeta_a$ around a ramification point $a$ such that $ x(z) = \zeta_a(z)^{r_a} + x(a)$. One is $ d\xi^{a,k}(z) = \Res_{z' = a} \Big( \int_a^{z'} B(z,\cdot) \Big) \frac{d\zeta (z')}{\zeta (z')^k}$, with $ 1 \leq k \leq r_a-1$, cf. \cite[Equation~(80)]{BKS20}, while the other is $ \xi^a(z) = \int^z\frac{B(\zeta_a, \cdot )}{d\zeta_a}\big|_{\zeta_a = 0}$, in case $ r_a = 2$, cf. \cite[Equation~(2.23)]{GKL21}. Both have merit, depending on the situation, but they are not compatible.\par
	Furthermore, several normalisation conventions exist for the recursion operator linking $ d\xi^j_k$ to $ d\xi^j_{k+1}$. These different conventions can be related by rescaling $ \Omega$ and the correlators, using that the integrand must be of degree $ 3g-3+n$.
\end{remark}

So the $\omega_{g,n}$ we are concerned with can be expanded in different ways: as a formal series around $ X = 0$ by \cref{OScorrelators}, and on a basis of meromorphic differentials with poles at the zeroes of $dx$ by \cref{EynardDOSS}. The change of variables we require is found by relating these different expansions.\par
In order to apply the Eynard-DOSS correspondence to get a good change of variables, we will want to take a different basis of $ V^*$. It turns out to be useful to relate to powers of our preferred coordinate $z$, so the basis we take is $ \xi^j \coloneqq \big( \frac{dx}{dz}\big)^{-1}z^j= \frac{d}{dx} \frac{z^{j+1}}{j+1}$.

\begin{definition}
	Let $ \Omega $ be a cohomological field theory on a space $(V, \eta)$ with a basis $ \{ e_j\}_{j \in J}$. Its \emph{generating function} $ G_\Omega $  is defined as
	\begin{equation}
		G_{\Omega}(\{ T^j_k \mid k \geq 0, j \in J \} ) \coloneqq \sum_{\substack{g,n\\2g-2+n>0}} \frac{\hbar^{2g-2+n}}{n!} \sum_{\substack{ j_i, \dotsc, j_n \in J}} \int_{\Mgn} \Omega (e_{j_1} \otimes \dotsb \otimes e_{j_n})\prod_{i=1}^n \sum_{k_i=1}^\infty \psi_i^{k_i} T^{j_i}_{k_i}\,,
	\end{equation}
	where we write $ \{ T^{j}\} $ for the basis of $ V^*$ dual to $ \{ e_{j}\}$ and $ T^j_k = T^j \otimes p_k$.
\end{definition}

The main theorem of this paper is the following:

\begin{theorem}\label{KPforHurwitzCohFT}
	If a cohomological field theory $ \Omega$ is obtained from \cref{EynardDOSS} applied to either family in \cref{OScorrelators}, then the exponential of $G_\Omega (\underline{T}(\underline{q})) $ is a KP tau-function in $\{ t_d = \frac{q_d}{d} \}$, where the $T^j_k(\underline{q})$ are defined by
	\begin{equation}
		T_{-1}^j = \frac{1}{j+1} q_{j+1}\,,
		\qquad
		T_{k+1}^j = \sum_{m=1}^\infty \sum_{l=0}^\infty m \mc{T}_l q_{m+l} \frac{\del}{\del q_m} T_k^j\,,
		\quad 
		\text{with $\mc{T}_l$ given by}
		\quad
		Q(z)^{-1} = \sum_{l=0}^\infty \mc{T}_l z^l\,.
	\end{equation}
\end{theorem}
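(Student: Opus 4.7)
The plan is to follow the three-step pattern from \cref{RecallHodge}, now using the general results of \cref{OScorrelators} and \cref{EynardDOSS} in place of their single-Hodge predecessors. The starting point is that $Z(\underline{p}) = \exp(F(\underline{p}))$ of \cref{OScorrelators} is a KP tau-function in $t_k = p_k/k$. Next, I would invoke \cref{KPtauWithoutUnstable} (established in \cref{changeofvars}) to perform the change of variable induced by $X = X(z)$ and simultaneously subtract the unstable contributions $H_{0,1}$ and $H_{0,2}$ of \cref{OSUnstable}; the conclusion of that theorem is precisely that the resulting expression, viewed in variables $t_d = q_d/d$, remains a KP tau-function. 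The remaining work is to identify this new tau-function with $\exp(G_\Omega(\underline{T}(\underline{q})))$ under the explicit substitution claimed.

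The identification proceeds via \cref{EynardDOSS}: under the assumption that we are in one of the families of \cref{OScorrelators} for which full topological recursion holds, the $\omega_{g,n}$ can be expanded on the basis $\{d\xi^j_k\}_{j,k}$ as intersection numbers of a CohFT $\Omega$. Integrating back from $\omega_{g,n}$ to $H_{g,n}$ replaces $d\xi^j_k$ by the antiderivatives $\xi^j_k = D^k\xi^j$ (the projection property of \cref{EynardDOSS} together with the absence of holomorphic contributions in the $\xi^j$-basis controls the integration constants for $2g-2+n>0$). Summing $\sum_{2g-2+n>0}\frac{\hbar^{2g-2+n}}{n!}H_{g,n}(z_1,\dotsc,z_n)$ then gives $G_\Omega$ with the formal variables $T^j_k$ replaced by the specific functions $\xi^j_k(z)$.

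The key computation is then that, under the identification $q_m \leftrightarrow z^m$, the polynomials $T^j_k(\underline{q})$ defined by the recursion are exactly the power series expansions of $\xi^j_k(z)$ in $z$. Indeed, $dx/dz = Q/z$, so $D = d/dx = (1/Q)(z\,d/dz)$, and acting on a series $g(z) = \sum_m a_m z^m$ yields
\begin{equation}
Dg(z) = \frac{1}{Q}\sum_m m\,a_m\,z^m = \sum_{m,l} m\,\mc{T}_l\,a_m\,z^{m+l},
\end{equation}
with $1/Q = \sum_l \mc{T}_l z^l$. In $q$-variables this is precisely the operator $\sum_{m,l} m\,\mc{T}_l\,q_{m+l}\,\partial/\partial q_m$ of the theorem's recursion. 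For the base case, $\int \xi^j\,dx = \int z^j\,dz = z^{j+1}/(j+1)$ matches $T^j_{-1} = q_{j+1}/(j+1)$, so that $T^j_0 = \sum_l \mc{T}_l q_{j+1+l} \leftrightarrow z^{j+1}/Q = \xi^j$. Induction gives $T^j_k \leftrightarrow \xi^j_k$ for all $k \geq 0$, closing the identification.

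The main obstacle I anticipate is bookkeeping around the unstable pieces: one must verify that the integration constants that appear when passing from $\omega_{g,n}$ back to $H_{g,n}$ are consistent with those already subtracted by \cref{KPtauWithoutUnstable}, in particular that the $(g,n) = (0,2)$ correction $dX_1\,dX_2/(X_1-X_2)^2$ appearing in \cref{OScorrelators} is precisely compensated by the $H_{0,2}$ subtracted in \cref{KPtauWithoutUnstable}. Apart from this bookkeeping, the argument is purely formal: the three ingredients combine to give the KP property of $\exp(G_\Omega(\underline{T}(\underline{q})))$ in the variables $t_d = q_d/d$, as claimed.
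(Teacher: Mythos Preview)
Your proposal is correct and follows essentially the same route as the paper: start from the hypergeometric tau-function, apply \cref{KPtauWithoutUnstable} to strip the unstable terms and pass to $q$-variables, and then use \cref{EynardDOSS} to rewrite the stable part as $G_\Omega$ with $T^j_k \leftrightarrow D^{k+1}\frac{z^{j+1}}{j+1} = \xi^j_k$, which is exactly the content of \cref{ProofKPForBDKSIntNumbers}. Your bookkeeping worry about integration constants is handled in the paper simply by noting that the $H_{g,n}$ have no constant terms in $z_i$, and the $(0,2)$ correction is absorbed precisely as you anticipate.
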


The proof of this theorem is given in \cref{ProofKPForBDKSIntNumbers}.

\begin{remark}
	The proof of this theorem does not use anything specific to the families mentioned, it just requires topological recursion to obtain a cohomological field theory. As soon as topological recursion is proved for another hypergeometric tau-function and the spectral curve fits in the scope of \cref{EynardDOSS}, this theorem generalises. 
\end{remark}

The first two KP equations in $q$ variables are

\begin{align}
	0 &= \frac{\del^2 F}{\del q_2^2} - \frac{\del^2 F}{\del q_3 \del q_1}  + \frac{1}{12} \frac{\del^4 F}{\del q_1^4} + \frac{1}{2} \Big( \frac{\del^2 F}{\del q_1^2} \Big)^2\,;
	\label{KP1}
	\\
	0 &= \frac{\del^2 F}{\del q_3 \del q_2} - \frac{\del^2 F}{\del q_4 \del q_1} + \frac{1}{6} \frac{\del^4 F}{\del q_2 \del q_1^3} + \frac{\del^2 F}{\del q_2 \del q_1} \frac{\del^2 F}{\del q_1^2}\,.
\end{align}

\begin{example}[Naive single Hodge]\label{NaiveHodge}
	Let us consider the functions
	\begin{equation}
		\hat{\psi} (\hbar^2, y) = y \,, \qquad \hat{y}(\hbar^2, z) = z\,.
	\end{equation}
	Then we find
	\begin{equation}
		x(z) = \log z - z\,, \qquad X(z) = z \, e^{-z} \,, \qquad Q = 1-z\,.
	\end{equation}
	This is the usual shape of spectral curve for simple Hurwitz numbers \cite{BM08,BEMS11} , so the CohFT associated to it by \cref{EynardDOSS} is the single Hodge class $ \Lambda (-1)$ via the ELSV formula \cite{ELSV}. In this case, writing $ T_k = \sum_{m=1}^\infty c_{k,m} q_m$, \cref{KPforHurwitzCohFT} yields
	\begin{equation}
		c_{k+1,m} = \sum_{j=0}^\infty j c_{k,j}\,.
	\end{equation}
	Along with the initial condition $ c_{-1,m} = \delta_{m,1}$, this shows that $ c_{k,m} = \Stirling{k+m}{m}$ for $ m > -1$, the Stirling numbers of the second kind. In particular,
	\begin{equation}
	\begin{aligned}
		T_0 
		&= q_1 + q_2 + q_3 + q_4 + q_5 + ...\,,
		\\
		T_1 
		&= q_1 + 3 q_2 + 6 q_3 + 10 q_4 + 15 q_5 + ...\,,
		\\
		T_2 
		&= q_1 + 7 q_2 + 25 q_3 + 65 q_4 + 140 q_5 + ...
	\end{aligned}
	\end{equation}
	Note that these are infinite sums, in contrast to the ones Kazarian found in \cite{KazarianHodge}, cf. \cref{KazarianRecursion,KazarianTDef}, even though both are related to single Hodge integrals. This phenomenon is explained by the arbitrary choice of a rational parametrisation of the spectral curve, formalised in \cref{XMoebius}.\par
	Using the intersection numbers
	\begin{align}
		\int_{\bar{\M}_{0,3}} 1 &=  \int_{\bar{\M}_{0,4}} \psi_i = 1\,,\\
		\int_{\bar{\M}_{1,1}} \lambda_1 &= \int_{\bar{\M}_{1,1}} \psi_1 = \int_{\bar{\M}_{1,2}} \psi_i^2 = \int_{\bar{\M}_{1,2}} \psi_1 \psi_2 = \int_{\bar{\M}_{1,2}} \lambda_1 \psi_1 =  \frac{1}{24}\,,
	\end{align}
	we see that
	\begin{equation}
		G_{\Lambda (-1)}(\underline{T}) = \hbar \big( \frac{1}{6} T_0^3 + \frac{1}{24} T_1 - \frac{1}{24} T_0\big) + \hbar^2 \big( \frac{1}{6} T_0^3 T_1 + \frac{1}{48} T_1^2 + \frac{1}{24} T_0T_2 - \frac{1}{24} T_0T_1 \big) + \mc{O}(\hbar^3)\,.
	\end{equation}
	From this, we obtain
	\begin{equation}
	\begin{aligned}
		\frac{\del^2 G_{\Lambda (-1)}(\underline{T}(\underline{q}))}{\del q_2^2} 
		&= 
		\hbar \sum_{k>0} q_k + \hbar^2 \bigg( \sum_{k,l >0} \Big( \frac{l(l+1)}{2} + 3\Big) q_k q_l + \frac{2\cdot 3^2}{48} + \frac{2\cdot 7}{24} - \frac{2\cdot 3}{24} \bigg) + \mc{O}(\hbar^3)\,,
		\\
		\frac{\del^2 G_{\Lambda (-1)}(\underline{T}(\underline{q}))}{\del q_1 q_3} 
		&=
		\hbar \sum_{k>0} q_k + \hbar^2 \bigg( \sum_{k,l >0} \Big( \frac{l(l+1)}{2} + \frac{7}{2} \Big) q_k q_l + \frac{2\cdot 6}{48} + \frac{25 + 1}{24} - \frac{6 +1}{24} \bigg) + \mc{O}(\hbar^3)\,,
		\\
		\frac{\del^4 G_{\Lambda (-1)}(\underline{T}(\underline{q}))}{\del q_1^4}
		&=
		\hbar^2 \frac{24}{6} + \mc{O}(\hbar^3)\,,
		\\
		\frac{\del^2 G_{\Lambda (-1)}(\underline{T}(\underline{q}))}{\del q_1^2}   
		&=
		\hbar \sum_{k>0} q_k + \mc{O}(\hbar^2)\,,
	\end{aligned}
	\end{equation}
	which does show that $ G_{\Lambda (-1)}(\underline{T}(\underline{q}))$ solves \cref{KP1} up to second order in $\hbar$.\par
	I would like to thank P. Norbury for using this example to check the results of this paper.
\end{example}

\subsection{The Mari\~{n}o-Vafa formula and KP for topological vertex amplitudes}\label{MVandKP}

A particularly interesting family of hypergeometric tau-functions is given by the theory of the topological vertex, or triple Hodge integrals. For the triple Hodge integrals, the ELSV-type formula required is the Mariño-Vafa formula~\cite{MarinoVafa}. 
This theory is the particular case for $\C^3$ of the Gromov-Witten theory of toric Calabi-Yau threefolds, conjectured by Bouchard-Klemm-Mariño-Pasquetti~\cite{BKMP09} to satisfy topological recursion. The case we are interested in was proved in \cite{Che18,Z09}, as well as in \cite{Eyn11} as an example of the general correspondence of \cref{EynardDOSS}, while the general BKMP conjecture was proved in \cite{EyOr}.\par
In this section, we use the triple Hodge integrals as an example of our general theory, using methods slightly adapted to this special case. We will see in \cref{Finiteness} why this case is particularly nice.


\begin{definition}
	The \emph{triple Hodge cohomological field theory with Calabi-Yau condition} is the one-dimensional CohFT $ \mathop{\textup{TH}}_{g,n}(a,b,c) = \Lambda (a) \Lambda (b) \Lambda (c)$, where the parameters $ a, b, c$ satisfy $ \frac{1}{a} + \frac{1}{b} + \frac{1}{c} = 0$.\par
	We write 
	\begin{equation}\
		G_{\textup{TH}}(a,b,c;\underline{T} ) \coloneqq  G_{\mathop{\textup{TH}}(a,b,c)}(\underline{T})\,.
	\end{equation}
\end{definition}

An adapted application of \cref{KPforHurwitzCohFT} is given in the following theorem. This theorem has already been proved by Alexandrov~\cite{Ale20}, here we give a new proof.

\begin{theorem}[{\cite[Theorem 2]{Ale20}}]\label{thm:KPforTH}
	Define $ T_0(\underline{q}) \coloneqq q_1$, $ T_{k+1}(\underline{q}) \coloneqq \sum_{m=1}^\infty m (u^2 q_m + u \frac{w +2}{\sqrt{w + 1}} q_{m+1} + q_{m+2})\frac{\del}{\del q_m} T_k $. Then
	\begin{equation}
		G_{\textup{TH}}\big( -u^2 ,-u^2 w ,\frac{u^2 w}{w +1};\{ T_{k}(\underline{q}) \} \big)
	\end{equation}
	is a solution of the KP hierarchy with respect to the variables $ \{ t_d = \frac{q_d}{d} \} $, identically in $ u $ and $ w $.
\end{theorem}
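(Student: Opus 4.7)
The proof strategy is to identify this statement as the rank-one specialisation of \cref{KPforHurwitzCohFT}, so the work splits into two tasks: realising the triple Hodge CohFT $\mathop{\textup{TH}}(a,b,c)$ with Calabi-Yau condition as the Eynard-DOSS output of explicit Orlov-Scherbin data $(\hat{\psi},\hat{y})$ in the sense of \cref{OScorrelators}, and then computing $Q(z)^{-1}$ for that data to verify it reduces to the three-term recursion in the statement.

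For the first task, the Mari\~{n}o-Vafa formula plays exactly the role that ELSV does in Kazarian's proof: it expresses the generating function of Calabi-Yau triple Hodge integrals as the $\hbar$-expansion of a hypergeometric tau-function of the form \cref{DefHypGeo}. I would read off $(\hat{\psi},\hat{y})$ from the Mari\~{n}o-Vafa formula so that the resulting spectral curve falls inside Family~I of \cref{OScorrelators}: concretely, the Calabi-Yau condition $\frac{1}{a} + \frac{1}{b} + \frac{1}{c} = 0$ should kill the $\mc{S}(\hbar\del_y)P_1(y)$ contribution and leave $\hat{\psi}(\hbar^2,y) = \log(P_2(y)/P_3(y))$ with $P_2,P_3$ linear in $y$, and $\hat{y}$ linear in $z$. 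The fact that \cref{EynardDOSS} applied to this curve yields exactly $\Lambda(a)\Lambda(b)\Lambda(c)$ on the rank-one vector space is the content of the topological-vertex computations of \cite{Che18,Eyn11}.

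For the second task, from $x(z) = \log z - \log(P_2(y(z))/P_3(y(z)))$ a direct differentiation should give $Q(z) = z\,\frac{dx}{dz}$ as a rational function whose numerator is $z^2$ and whose denominator is a quadratic polynomial in $z$. Consequently $Q(z)^{-1}$ is a polynomial of degree exactly two, so only $\mc{T}_0,\mc{T}_1,\mc{T}_2$ are non-zero in the expansion appearing in \cref{KPforHurwitzCohFT}, which is precisely what collapses the general recursion to the three-term shape of the statement. Under the parametrisation $a = -u^2$, $b = -u^2 w$, $c = \frac{u^2 w}{w+1}$, I expect the three non-zero coefficients to come out as $\mc{T}_0 = u^2$, $\mc{T}_1 = u\frac{w+2}{\sqrt{w+1}}$, $\mc{T}_2 = 1$. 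The initial condition $T_0(\underline{q}) = q_1$ then corresponds to $T^{0}_{-1} = q_1$ in \cref{KPforHurwitzCohFT} under an index shift reflecting that this is a rank-one CohFT (only $j = 0$).

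The main obstacle I anticipate is bookkeeping all normalisations so that the identification with $\mathop{\textup{TH}}(a,b,c)$ comes out cleanly rather than up to rescaling: one must use the freedoms of \cref{TorusAction} and \cref{ConstantTermpsi} to match the framing/scaling ambiguity of the topological-vertex spectral curve to the parameters $(u,w)$, and in particular verify that the awkward factor $\frac{w+2}{\sqrt{w+1}}$ appears with the right sign. Once these choices are pinned down, the argument is a finite explicit computation, and the conclusion follows by applying \cref{KPforHurwitzCohFT} and relabelling $T^{0}_{k-1} \leftrightarrow T_k$.
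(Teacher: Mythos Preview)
Your overall plan---realise triple Hodge as Orlov--Scherbin data, then specialise \cref{KPforHurwitzCohFT}---is sound in spirit, but two concrete steps go wrong.

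First, the Mari\~{n}o--Vafa data land in Family~II, not Family~I: one gets $\hat{\psi}(\hbar^2,y) = -y/w$ (the $\alpha y$ shape) and $\hat{y}$ involving $\mc{S}(\hbar k)^{-1}$, with $X(z) = z(1-\beta w z)^{1/w}$; see \cref{MarinoVafaIsBDKS}. The Calabi--Yau condition does not produce a logarithmic $\hat{\psi}$. Second, and more seriously, in this natural coordinate $Q(z)^{-1} = \dfrac{1-\beta w z}{1-\beta(w+1)z}$ is \emph{not} a polynomial, so the recursion of \cref{KPforHurwitzCohFT} does not collapse to three terms. The missing ingredient is a M\"obius reparametrisation of the spectral curve: after $z \mapsto \frac{z}{1+(w+1)\beta z}$ one obtains the alternative $X$ of \cref{Inverseseries}, for which $Q(z)^{-1} = (1+\beta z)(1+(w+1)\beta z)$ is genuinely quadratic. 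That this M\"obius change does not disturb the KP property (and needs no extra quadratic correction) is exactly the content of \cref{XMoebius}. A final rescaling $q_m = u^{4m}\tilde q_m$, using quasi-homogeneity of KP, is then needed to turn the coefficients $(1,(w+2)\beta,(w+1)\beta^2)$ into $(u^2,\,u\frac{w+2}{\sqrt{w+1}},\,1)$.

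As a side remark, the paper's own argument is slightly more direct than yours: since the Mari\~{n}o--Vafa formula already gives the intersection-theoretic expansion explicitly (\cref{THHurwitztoTHgen}), one can bypass topological recursion and \cref{EynardDOSS} altogether and apply \cref{KPtauWithoutUnstable} (via \cref{quadraticcorrectionforx}) directly to the rewritten generating function. Your route through \cref{KPforHurwitzCohFT} would also work once the M\"obius step is inserted, but it is a detour here.
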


In this particular case, we will make slightly different choices to end up with the formulation above.

\begin{remark}
	Note that the triple $ a= -u^2 $, $ b= -u^2w $, $ c = \frac{u^2 w}{w +1} $ does indeed satisfy $ \frac{1}{a} + \frac{1}{b} +\frac{1}{c} = 0$, and moreover any triple satisfying this condition can be written this way.
\end{remark}

\begin{remark}
	In the limit $ w \to 0$, this theorem reduces to the main theorem, 2.1, of \cite{KazarianHodge}. In the limit $ u \to 0$, it reduces to the Witten-Kontsevich theorem~\cite{Witten,Kontsevich}: in that limit $ T_d\to (2d-1)!! q_{2d+1}  $ and independence of even parameters reduces the KP hierarchy to the KdV hierarchy.
\end{remark}

Before giving the Mari\~{n}o-Vafa formula, note that in genus zero
\begin{equation}
	\int_{\overline{\mc{M}}_{0,n}} \frac{\Lambda (a) \Lambda (b) \Lambda (c)}{ \prod_{i=1}^n 1-\mu_i \psi_i^{d_i}} = |\mu |^{n-3}
\end{equation}
for $ n \geq 3$, and this serves as a definition for $ n = 1,2$. These terms are not included in $ G_{\textup{TH}} $.

\begin{theorem}[Mari\~{n}o-Vafa formula, \cite{MarinoVafa,LLZ03,OP04}]\label{MVthm}
	There is a relation between triple Hodge integrals and characters of symmetric groups, as follows:
	\begin{equation}
	\begin{split}
		\exp \bigg( \sum_\mu \sum_{g = 0}^\infty \frac{(w + 1)^{g +n-1}}{|\Aut \mu |} \prod_{i=1}^n \frac{\prod_{j=1}^{\mu_i-1} (\mu_i + j w )}{(\mu_i-1) !} &\int_{\Mgn} \!\! \frac{\Lambda (-1) \Lambda (-w ) \Lambda \big(\frac{w}{w + 1}\big)}{\prod_{i=1}^n (1- \mu_i \psi_i )} \, \beta^{2g-2+n + | \mu |} \, p_\mu \bigg) 
		\\
		&
		= \sum_{m=0}^\infty \sum_{\mu, \nu \vdash m} \frac{\chi^\nu_\mu}{z_\mu} e^{(1+\frac{w}{2})\beta f_2(\nu )} \prod_{\square \in \nu } \frac{\beta w}{\varsigma (\beta w h_\square )} p_\mu\,. \label{MV-formula}
	\end{split}
	\end{equation}
	On the right-hand side the sum is over all partitions $ \nu $ of size equal to $ |\mu | $, the product is over all boxes in the Young diagram of $ \nu $, and $ h_\square $ is the hook length of the box  $ \square $. Furthermore, $ f_2 (\nu) = \frac{1}{2}\sum_{j}  (\nu_j-j+\tfrac{1}{2})^2 - (-j + \tfrac{1}{2})^2$ is the shifted symmetric sum of squares.
\end{theorem}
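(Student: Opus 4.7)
The plan is to deduce Theorem \ref{thm:KPforTH} as the rank-one special case of Theorem \ref{KPforHurwitzCohFT} applied to the Orlov-Scherbin data underlying the Mari\~{n}o-Vafa formula. The first task is to recognise the right-hand side of \eqref{MV-formula} as a hypergeometric KP tau-function in the sense of Theorem \ref{OScorrelators}. The content identity $f_2(\nu) = \sum_{\square} c_\square$ shows that the factor $e^{(1+w/2)\beta f_2(\nu)}$ contributes a linear-in-content piece to $\hat\psi$, while the hook-length product $\prod_\square \beta w/\varsigma(\beta w h_\square)$ is absorbed into the second Schur factor $s_\nu(\{\hat y_k/\hbar\})$ through the classical specialisation identity that converts such products over hooks into Schur functions evaluated at a rational power series. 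After setting $\beta = u^2$ (playing the role of $\hbar^2$) and applying the rescaling freedom of \cref{TorusAction}, the data falls into family I of Theorem \ref{OScorrelators}.

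With this identification, the spectral curve data can be computed explicitly: from $x(z) = \log z - \psi(y(z))$ one obtains
\begin{equation*}
	\frac{dx}{dz} = \frac{1}{z(z+u\sqrt{w+1})(z+u/\sqrt{w+1})}, \qquad Q(z)^{-1} = z^2 + \frac{u(w+2)}{\sqrt{w+1}}\, z + u^2,
\end{equation*}
so that $\mathcal{T}_0 = u^2$, $\mathcal{T}_1 = u(w+2)/\sqrt{w+1}$, $\mathcal{T}_2 = 1$, and $\mathcal{T}_l = 0$ for $l \geq 3$. Working in the coordinate $\tilde z = 1/z$ near infinity shows that $dx$ has a single simple zero, at $z = \infty$, so by \cref{EynardDOSS} the associated CohFT is rank one, and by the Mari\~{n}o-Vafa formula \cref{MVthm} this CohFT must equal the triple Hodge class $\Lambda(-u^2)\Lambda(-u^2 w)\Lambda(u^2 w/(w+1))$. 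Theorem \ref{KPforHurwitzCohFT} then yields that $\exp G_{\textup{TH}}(\underline{T}(\underline{q}))$ is a KP tau-function in $t_d = q_d/d$, where the finite three-term recursion determined by the $\mathcal{T}_l$ is precisely the one in the statement. Since the CohFT is rank one we drop the index $j$, and the base case $T^0_{-1} = q_1$ of \cref{KPforHurwitzCohFT} matches $T_0 = q_1$ here, up to a shift of the enumeration of $T_k$ by one.

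The main obstacle is the identification of the Mari\~{n}o-Vafa tau-function with a family I Orlov-Scherbin tau-function, which requires carefully unpacking the hook-length to content conversion in \eqref{MV-formula}; this is where the Schur-function specialisation identity enters, and tracking the coefficients of the resulting rational functions $\psi(y)$ and $y(z)$ through the rescalings involving $\beta$, $w$, and $u$ is delicate. A secondary check is the rank-one count: the form $dx$ should have exactly one zero, of order one at infinity, which depends on the cancellation between the partial-fraction residues at the three finite poles of $dx/dz$ (their sum vanishes because $1/u^2 + 1/(u^2 w) - (w+1)/(u^2 w) = 0$), a Calabi-Yau-like condition reflecting the triple Hodge parameters.
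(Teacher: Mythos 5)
Your proposal does not prove the statement it is attached to. The statement is the Mari\~no--Vafa formula itself, \cref{MVthm}: the identity between the exponential of the triple Hodge generating series and the character sum $\sum_{\mu,\nu} \frac{\chi^\nu_\mu}{z_\mu} e^{(1+\frac{w}{2})\beta f_2(\nu)} \prod_\square \frac{\beta w}{\varsigma(\beta w h_\square)} p_\mu$. This is a deep geometric identity, proved in the cited references \cite{LLZ03,OP04} by cut-and-join/localization methods, and the paper imports it without proof. What you have written is instead an argument for \cref{thm:KPforTH} (KP integrability of the triple Hodge generating function), and your argument explicitly \emph{invokes} \cref{MVthm} midway through (``by the Mari\~no--Vafa formula \cref{MVthm} this CohFT must equal the triple Hodge class'') to identify the CohFT produced by \cref{EynardDOSS}. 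As a proof of \cref{MVthm} this is circular: the identification of the Eynard--DOSS CohFT of that spectral curve with $\Lambda(-1)\Lambda(-w)\Lambda(\frac{w}{w+1})$ is essentially equivalent to the Mari\~no--Vafa formula (it is the BKMP statement for $\C^3$), and it is precisely the step that the references prove by independent geometric means.

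Two further inaccuracies even viewed as an argument for \cref{thm:KPforTH}: the right-hand side of \eqref{MV-formula} is a Family II tau-function, not Family I --- the hook-length product $\prod_\square \varsigma(\beta w h_\square)^{-1}$ translates into $\hat y$ containing $\mathcal{S}(\hbar k)^{-1}$ coefficients with $\hat\psi$ linear in $y$, as in \cref{MarinoVafaIsBDKS}; and the paper's own route to the recursion coefficients $\mathcal{T}_l$ goes through the explicit inversion of $X(z)$ in \cref{Inverseseries} (a residue computation) rather than through a partial-fraction analysis of $dx$, though your computation of $Q(z)^{-1}$ as a quadratic polynomial is consistent with \cref{xdiffequation} after the rescalings by $u$ and $\beta$.
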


\begin{remark}
	Even though it seems the triple Hodge class in this formula only depends on one parameter, $ w $, the parameter $ \beta $ can be interpreted in this way as well, entering as a cohomological grading parameter. Hence, the formula does govern the entire generating function of triple Hodge integrals.\par
In the limit $ w \to 0 $, the Mari\~{n}o-Vafa formula reduces to the ELSV formula, as the product over boxes simplifies to the hook length formula for the dimension of the $ \mf{S}_{|\mu |} $-representation associated to $ \nu $.
\end{remark}

\begin{remark}
	This formula is perfectly well-behaved for $ w = -1$, but \cref{thm:KPforTH} does not make sense in this case. From the general \cref{KPforHurwitzCohFT}, we will see that in this case $X$ is a Möbius transformation, and hence conforms to \cref{XMoebius}.\par
	By symmetry in the arguments of the $ \Lambda $-classes, the point $ w= -1$ is equivalent to the limit $ w \to \infty $, which in the conventional formulation of the Mari\~no-Vafa formula is the initial condition for the cut-and-join equation used to prove the formula, see \cite[Theorem 3.3]{Z03}. In this case, the integral reduces to $ \int_{\overline{\mc{M}}_{g,1}} \lambda_g \psi^{2g-2} $ by Mumford's relation. These integrals were calculated by Faber and Pandharipande \cite{FaPa00}.
\end{remark}

The right-hand side of the Mariño-Vafa formula is a hypergeometric KP tau-function, which can be seen explicitly by the following lemma. In essence this lemma was used by both \cite{LLZ03,OP04} to prove the Mariño-Vafa formula.

\begin{lemma}\label{MarinoVafaIsBDKS}
	Relabel parameters in \cref{MV-formula} by $ \beta \to \frac{\hbar}{w} $ and $ p_k \to \big( \frac{\beta w}{\hbar}\big)^k p_k$ to obtain
	\begin{equation}
	\begin{split}
		\exp \bigg( \sum_\mu \sum_{g = 0}^\infty \frac{(w + 1)^{g +n-1}}{|\Aut \mu |} \prod_{i=1}^n \frac{\prod_{j=1}^{\mu_i-1} (\mu_i + j w )}{(\mu_i-1) !} \int_{\Mgn} \!\! &\frac{\Lambda (-1) \Lambda (-w ) \Lambda \big(\frac{w}{w + 1}\big)}{\prod_{i=1}^n (1- \mu_i \psi_i )} \, \hbar^{2g-2+n} \beta^{| \mu |} w^{|\mu | + 2 - 2g -n} \, p_\mu \bigg) 
		\\
		&
		= \sum_{m=0}^\infty \sum_{\mu, \nu \vdash m} \frac{\chi^\nu_\mu}{z_\mu} e^{(\frac{1}{w}+\frac{1}{2})\hbar f_2(\nu )} \prod_{\square \in \nu } \frac{\beta w}{\varsigma (\hbar h_\square )} p_\mu\,. 
		\end{split}
	\end{equation}
	This right-hand side may alternatively be written as a hypergeometric KP tau-function in the shape of \cref{OScorrelators}, with
	\begin{equation*}
		\hat{\psi} (\hbar^2,y) = -\frac{y}{w} \,, 
		\qquad 
		\hat{y}(\hbar^2,z) = \sum_{k=1}^\infty \frac{1}{k \mc{S} (\hbar k)} (\beta wz)^k\, ,
		\qquad
		X (z) = z (1-\beta wz)^{1/w}\,.
	\end{equation*}
	This fits in Family II.
\end{lemma}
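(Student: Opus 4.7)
The statement factors as three sub-claims: (i) the rescaling identity for \eqref{MV-formula}; (ii) the identification of the rescaled right-hand side with an Orlov--Scherbin hypergeometric tau-function for the stated $\hat\psi$ and $\hat y$; (iii) membership of this pair in Family II of \cref{OScorrelators}. Parts (i) and (iii) are essentially bookkeeping, while the real content of (ii) is a classical Schur-function specialisation identity.

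For (i), I would substitute $\beta \mapsto \hbar/w$ and $p_k \mapsto (\beta w/\hbar)^k p_k$ into both sides of \eqref{MV-formula} and track homogeneity in $\hbar$, $\beta$, and $w$. On the left the factor $\beta^{2g-2+n+|\mu|}$ combined with the rescaling of $p_\mu$ (which contributes $(\beta w/\hbar)^{|\mu|}$) gives the stated weight in front of each Hodge integral. On the right, $e^{(1+w/2)\beta f_2(\nu)}$ becomes $e^{(1/w + 1/2)\hbar f_2(\nu)}$, while $\prod_\square \beta w/\varsigma(\beta w h_\square)$ becomes $\prod_\square \hbar/\varsigma(\hbar h_\square)$; absorbing the $(\beta w/\hbar)^{|\mu|}$ from $p_\mu$ and using $|\nu|=|\mu|$ reinstalls a numerator $\beta w$ in each box factor.

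For (ii), I would invert the order of summation via Frobenius' formula $s_\nu(\underline p) = \sum_{\mu \vdash |\nu|} z_\mu^{-1}\chi^\nu_\mu p_\mu$, so the right-hand side becomes $\sum_\nu c_\nu\, s_\nu(\underline p)$. To match \eqref{DefHypGeo} I take $\hat\psi(\hbar^2,y) = -y/w$, so that the identity $\sum_\square c_\square = f_2(\nu)$ (an easy consequence of unpacking $f_2$ in the Notation subsection) yields $\sum_\square \hat\psi(\hbar^2,-\hbar c_\square) = (\hbar/w) f_2(\nu)$, accounting for the $(1/w)\hbar f_2(\nu)$ part of the exponent. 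The remaining factor to be produced by $s_\nu(\{\hat y_k/\hbar\})$ is $e^{\hbar f_2(\nu)/2} \prod_\square \beta w/\varsigma(\hbar h_\square)$. In the convention of \cite{BDKS20} the argument is read as times, i.e.\ power sums $p_k = k\hat y_k/\hbar = (\beta w)^k/\varsigma(\hbar k)$. Writing $q = e^{-\hbar}$, this reads $p_k = (\beta w q^{1/2})^k/(1-q^k)$, the principal specialisation with $x_i = \beta w q^{i+1/2}$. The classical hook-content formula $s_\nu(1,q,q^2,\dotsc) = q^{n(\nu)} \prod_\square (1 - q^{h_\square})^{-1}$, combined with $\sum_\square h_\square = n(\nu) + n(\nu') + |\nu|$ and $n(\nu') - n(\nu) = \sum_\square c_\square = f_2(\nu)$, reorganises the resulting expression into exactly the required product; this is the main technical step.

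For (iii), since $z\partial_z$ acts as multiplication by $k$ on $z^k$, the operator $\mc{S}(\hbar z \partial_z)^{-1}$ maps $(\beta wz)^k$ to $(\beta wz)^k/\mc{S}(\hbar k)$, and therefore $\mc{S}(\hbar z \partial_z)^{-1}\bigl(-\log(1 - \beta wz)\bigr) = \sum_k (\beta wz)^k/(k\mc{S}(\hbar k)) = \hat y(\hbar^2,z)$, which is precisely the Family II template with $R_1/R_2 \equiv 0$, $R_3 = 1$, $R_4 = 1 - \beta wz$, and $\alpha = -1/w$. The claimed form for $X$ is then a direct substitution into $X(z) = z\, e^{-\psi(y(z))}$ using $\psi(y) = -y/w$ and $y(z) = -\log(1-\beta wz)$. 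The chief obstacle is the Schur identity in (ii); once this is in place the rest is assembly.
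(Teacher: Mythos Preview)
Your proposal is correct and follows essentially the same route as the paper: both convert the character sum to a Schur expansion via Frobenius, identify the content-weight with $\hat\psi(\hbar^2,-\hbar c_\square)$ using $f_2(\nu)=\sum_\square c_\square$, and match the hook product against a principal specialisation of $s_\nu$ (the paper quotes this from \cite{OP04}, you rederive it from the hook-content formula; these are the same identity). One cosmetic slip: in your convention line you write $p_k = k\hat y_k/\hbar$, but with $\hat y_k = (\beta w)^k/(k\mc{S}(\hbar k))$ one has $\hat y_k/\hbar = (\beta w)^k/\varsigma(\hbar k)$ directly, without the extra $k$; your stated value for $p_k$ is nonetheless correct, so this does not affect the argument.
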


\begin{proof}
	By basic theory of symmetric functions, $ \sum_{\mu \vdash m} \frac{\chi^\nu_\mu}{z_\mu} p_\mu = s_\nu(\underline{p})$. Also, by \cite[Equations (0.6), (0.7)]{OP04},
	\begin{equation*}
		 \frac{1}{\prod_{\square \in \nu} q^{h_\square/2} - q^{-h_\square/2}} = q^{-|\nu|/2 - f_2(\nu)/2} s_\nu(1,q^{-1},q^{-2},\dotsc )
	\end{equation*}
	where here the $ q^{-k}$ are the `usual' variables, i.e. the ones in which $s_\nu$ is symmetric, not the power sum variables.\par
	Writing $q = e^{\hbar}$ and using that $ f_2(\nu) =   \sum_{\square \in \nu} c_\square $ gives
	\begin{align*}
		\sum_{m=0}^\infty \sum_{\mu, \nu \vdash m} \frac{\chi^\nu_\mu}{z_\mu} e^{(\frac{1}{w}+\frac{1}{2})\hbar f_2(\nu )} \prod_{\square \in \nu } \frac{\beta w}{\varsigma (\hbar h_\square )} p_\mu 
		&=
		\sum_{m=0}^\infty \sum_{\nu \vdash m} s_\nu(\underline{p}) e^{\sum_{\square \in \nu}\frac{\hbar}{w} c_\square} s_\nu \Big( \Big\{ \frac{\beta w}{e^{\hbar (l+\frac{1}{2})}} \Big\}_{l=0}^\infty \Big) \,.
	\end{align*}
	To revert to power-sum variables, we use that $ p_l \Big( \Big\{ \frac{\beta w}{e^{\hbar (l+\frac{1}{2})}} \Big\}_{l=0}^\infty \Big) = \frac{(\beta w)^k}{\varsigma (\hbar k)}$, and inserting this in the definition of $\hat{y}$ yields the result.
\end{proof}

If one were to relabel in stead by $ \beta \to \beta \hbar $ and $ p_k \to \hbar^{-k} p_k$, i.e. just to naively introduce a parameter $ \hbar$, one would not end up in Family II, but in the extension of \cref{ExtendedFamilyII}. These two choices are related by \cref{TorusAction}, for $ \lambda = \beta w$.

Zhou~\cite{Z10} also explored this relation between triple Hodge integrals and integrable hierarchies, extending it to the 2-Toda hierarchy and to certain relative Gromov-Witten theories.

\section{KP hierarchy for intersection numbers}\label{sec:KPforTH}

In this section, we will formulate and prove the main theorem, generalising Kazarian's method to the generating functions of intersection numbers coming from hypergeometric tau-functions.

\subsection{The change of variables}\label{changeofvars}

We will interpret any $X(z)$ defined by \cref{psiandy,XfromKP} as giving a change of coordinates. For this, define a linear correspondence $ \Theta $ between power series in $ X$ or $ z$ on the one hand and linear series in $ p$ or $ q$ on the other by
\begin{align}
	p_k 
	&\leftrightarrow X^k \,, 
	& q_m 
	&\leftrightarrow z^m\,.
\end{align}

This defines a change of coordinates as follows:

\begin{definition}\label{LinearCorrespondence}
	We define a linear morphism between power series in $ \{ p_m \}_{m \geq 1} $ and $ \{ q_d \}_{d \geq 1} $ by
	\begin{equation}\label{ptoq}
		p_k (\underline{q}) = \sum_{m = k}^\infty c_k^m q_m 
		\qquad
		\text{with $ c_k^m $ given by}
		\qquad
		X^k = \sum_{m=k}^\infty c_k^m z^m\,.
	\end{equation}
\end{definition}

In order to make this change of coordinates and remain within the realm of solutions of the KP hierarchy, we should flow along the action of the infinite general linear algebra. Hence, we should find the infinitesimal flow associated to this change. For this, we introduce a flow parameter $\beta $ by
\begin{equation}
	X_\beta (z) \coloneqq \frac{1}{\beta} X(\beta z) = z e^{-\psi (y (\beta z))}\,,
\end{equation}
such that $ X_0(z) = z$ and $ X_1(z) = X(z)$.

\begin{lemma}\label{FlowXalongbeta}
	For $ X_\beta(z) \coloneqq \frac{1}{\beta} X(\beta z) $, where $X(z) = z + \mc{O}(z^2)$, and with $Q(z) \coloneqq \frac{z}{X} \frac{dX}{dz}$, the flow along $\beta$ of the function $ X_\beta $ is given by
	\begin{equation}
		\frac{\del X_\beta}{\del \beta} = \Big( 1- \frac{1}{Q(\beta z)} \Big) \frac{z}{\beta} \frac{\del X_\beta}{\del z} = \frac{1}{\beta} \big( Q(\beta z) -1\big) X_\beta\,.
	\end{equation}
\end{lemma}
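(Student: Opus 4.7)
The statement is essentially a direct computation from the two descriptions $X_\beta(z) = \frac{1}{\beta} X(\beta z) = z e^{-\psi(y(\beta z))}$ and the definition of $Q$. I would proceed by computing both sides of the claimed equalities directly and matching them, rather than invoking any structural argument.

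First I would differentiate $X_\beta(z) = \frac{1}{\beta} X(\beta z)$ with respect to $\beta$, applying the product and chain rules to get
\begin{equation*}
\frac{\partial X_\beta}{\partial \beta} = -\frac{1}{\beta^2} X(\beta z) + \frac{z}{\beta} X'(\beta z) = -\frac{1}{\beta} X_\beta + \frac{z}{\beta} X'(\beta z),
\end{equation*}
where $X'$ denotes the derivative in the argument. Next I would rewrite $X'(\beta z)$ using the definition $Q(w) = \frac{w}{X(w)} \frac{dX}{dw}$ evaluated at $w = \beta z$, obtaining $X'(\beta z) = \frac{X(\beta z) Q(\beta z)}{\beta z} = \frac{X_\beta \, Q(\beta z)}{z}$. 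Substituting gives
\begin{equation*}
\frac{\partial X_\beta}{\partial \beta} = -\frac{1}{\beta} X_\beta + \frac{Q(\beta z)}{\beta} X_\beta = \frac{1}{\beta}\bigl( Q(\beta z) - 1 \bigr) X_\beta,
\end{equation*}
which is the second equality.

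For the first equality, I would similarly compute
\begin{equation*}
\frac{\partial X_\beta}{\partial z} = X'(\beta z) = \frac{X_\beta \, Q(\beta z)}{z},
\end{equation*}
so that $\frac{z}{\beta} \frac{\partial X_\beta}{\partial z} = \frac{Q(\beta z)}{\beta} X_\beta$. Multiplying by $\bigl(1 - \frac{1}{Q(\beta z)}\bigr)$ produces $\frac{1}{\beta}(Q(\beta z) - 1) X_\beta$, matching the expression derived above. Note that this manipulation requires $Q(\beta z)$ to be nonzero, which holds as a formal power series since $X(z) = z + \mathcal O(z^2)$ implies $Q(z) = 1 + \mathcal O(z)$ is invertible.

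There is no real obstacle here: the assumption $X(z) = z + \mathcal O(z^2)$ guarantees $X_\beta$ is a well-defined power series in $z$ with coefficients analytic in $\beta$ near $\beta = 0$, and that $Q(\beta z)$ is an invertible power series so that the factor $1/Q(\beta z)$ makes sense. The only mildly subtle point is noticing that the factor $z/\beta$ pairs cleanly with $\partial X_\beta/\partial z$ because the homogeneity built into $X_\beta = \frac{1}{\beta} X(\beta z)$ means the $z$- and $\beta$-derivatives are both proportional to $X'(\beta z)$, which is exactly what allows the single quantity $Q(\beta z) - 1$ to bridge them.
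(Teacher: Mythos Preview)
Your proof is correct and follows essentially the same direct computation as the paper: differentiate $X_\beta = \frac{1}{\beta}X(\beta z)$ in $\beta$, then use the definition of $Q$ to rewrite. The only cosmetic difference is that the paper substitutes $X(\beta z) = \frac{\beta z}{Q(\beta z)} X'(\beta z)$ to obtain the middle expression first, whereas you substitute $X'(\beta z) = \frac{Q(\beta z)}{\beta z} X(\beta z)$ to obtain the rightmost expression first and then deduce the middle one; both are trivially equivalent.
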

\begin{proof}
	By definition of $Q$, $ X = \frac{z}{Q(z)} \frac{d X}{d z}$. Therefore,
	\begin{align*}
		\frac{\del X_\beta}{\del \beta}
		&=
		\frac{\del}{\del \beta} \Big( \frac{1}{\beta} X(\beta z)\Big)
		=
		\frac{z}{\beta} \frac{d X}{dz} \Big|_{z \to \beta z} - \frac{1}{\beta^2} X (\beta z)
		\\
		&=
		\frac{z}{\beta} \frac{d X}{dz} \Big|_{z \to \beta z} - \frac{1}{\beta^2} \Big( \frac{z}{Q(z)} \frac{d X}{dz} \Big) \Big|_{z \to \beta z}
		=
		\Big( 1 - \frac{1}{Q(\beta z)} \Big) \frac{z}{\beta} \frac{\del X_\beta}{\del z}\,.\qedhere
	\end{align*}
\end{proof}

We will use this with \cite[Theorem 2.5]{KazarianHodge}, which uses the $ \widehat{\mf{gl}(\infty)} $ action on $ \tau $-functions:

\begin{theorem}[\cite{KazarianHodge}] \label{changeofvarsforKP}
	In the situation of a correspondence like \cref{ptoq}, there is a quadratic function $ Q(\underline{p} )$ such that the transformation sending an arbitrary series $ \Phi (\underline{p} )$ to the series $ \Psi (\underline{q} )=(\Phi +Q)\big|_{p \to p(\underline{q})} $ is an automorphism of the KP hierarchy: it sends solutions to solutions.
\end{theorem}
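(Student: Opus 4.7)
The plan is to realise the substitution $p_k \mapsto p_k(\underline q)$ as the time-$1$ map of a one-parameter flow on the bosonic Fock space $\Lambda$ whose infinitesimal generator agrees, up to multiplication by a polynomial quadratic in the $p_j$'s, with an element of $\widehat{\mathfrak{gl}(\infty)}$. Since the orbit of the vacuum $1 \in \Lambda$ under $\widehat{\mathfrak{gl}(\infty)}$ is precisely the set of KP tau-functions, this yields the stated automorphism, with $Q$ arising from integrating the multiplicative piece of the generator along the flow.

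I would first interpolate using $X_\beta(z) = \beta^{-1} X(\beta z)$ from Lemma~\ref{FlowXalongbeta}: at $\beta = 0$ this is the identity substitution, and at $\beta = 1$ it is the one from Definition~\ref{LinearCorrespondence}. Defining $c_k^m(\beta)$ by $X_\beta(z)^k = \sum_{m \geq k} c_k^m(\beta)\, z^m$ and expanding $Q(z) - 1 = \sum_{l \geq 1} \mathcal{Q}_l\, z^l$, Lemma~\ref{FlowXalongbeta} yields the linear ODE
\[
\frac{d}{d\beta} c_k^m(\beta) \;=\; k \sum_{l \geq 1} \mathcal{Q}_l\, \beta^{l-1}\, c_k^{m-l}(\beta).
\]
This in turn identifies the infinitesimal generator $V(\beta)$ of the substitution flow on $\Lambda$ as a first-order differential operator with coefficients linear in the $p_j$'s.

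Next I would lift $V(\beta)$ to $\widehat{\mathfrak{gl}(\infty)}$ using the Heisenberg operators $\{a_n\}$ of Section~\ref{InfGrass}. The derivative-and-shift structure $\sum_{k>0} k\, p_{k+l}\, \partial/\partial p_k$ is precisely the pure vector-field part of the Virasoro-type element $L_l$; the full $L_l \in \widehat{\mathfrak{gl}(\infty)}$ additionally contains the multiplicative piece $\tfrac{1}{2}\sum_{0 < i < l} p_i p_{l-i}$, arising from the normal ordering of Heisenberg generators with two positive indices. Assembling these contributions over $l$ produces an element $\widehat V(\beta) = V(\beta) + M(\beta) \in \widehat{\mathfrak{gl}(\infty)}$, with $M(\beta)$ multiplication by a polynomial quadratic in the $p_j$'s.

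Finally, since $\widehat{\mathfrak{gl}(\infty)}$ preserves KP tau-functions, exponentiating $\widehat V(\beta)$ from $\beta = 0$ to $\beta = 1$ sends a tau-function $e^\Phi$ to a tau-function $e^{(\Phi + Q)|_{p \to p(\underline q)}}$, where $Q$ is the quadratic polynomial obtained by integrating $M(\beta)$ along the flow. The main obstacle is showing that $V(\beta)$ converges in a suitable completion of $\widehat{\mathfrak{gl}(\infty)}$ and that the associated multiplicative correction is indeed quadratic rather than of higher order; since only the \emph{existence} of $Q$ is asserted (not an explicit formula), this can be carried out abstractly using the projective representation theory of $\widehat{GL(\infty)}$ on Fock space \cite{Sato82}, without any explicit computation of $Q$.
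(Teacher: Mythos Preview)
Your approach is correct and is essentially the one the paper itself uses: the theorem is cited without proof from Kazarian, but the paper implements exactly this argument in the proof of \cref{KPpreservingBDKS}, where the $\beta$-interpolation $X_\beta(z)=\beta^{-1}X(\beta z)$ is used, the generator of the substitution flow is identified with the differential part of a linear combination of Virasoro operators $L_m$ in the $q$-variables, and the quadratic completion arising from normal ordering is integrated to produce $Q$ (there computed explicitly as $-\tfrac12\Theta(H_{0,2})$).

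Two small remarks. First, the generator $V(\beta)$ is naturally a first-order operator with coefficients linear in the $q_j$, not the $p_j$: under $\Theta_\beta$ the vector field $z^{l+1}\partial_z$ becomes $\sum_m m\,q_{m+l}\,\partial/\partial q_m$, the vector-field part of $L_l(\underline q)$. Second, your ODE for $c_k^m$ comes from the multiplicative form $\tfrac{1}{\beta}(Q(\beta z)-1)X_\beta$ of \cref{FlowXalongbeta}; the factor $k$ it carries is absorbed precisely when one passes back to the vector-field form $(1-Q(\beta z)^{-1})\tfrac{z}{\beta}\partial_z$, which is the version the paper uses and which makes the identification with $\sum_l (\text{coeff})\,L_l$ immediate. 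Neither point is a gap, and your final step (the exponentiated flow of $V(\beta)+M(\beta)$ acts on $\log\tau$ as linear substitution plus a pushed-forward quadratic, hence still quadratic) is exactly the mechanism behind the statement.
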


The function $Q(\underline{p})$ is not unique.

\begin{proposition}\label{KPpreservingBDKS}
	In the general situation of \cref{OScorrelators}, without analytic assumptions, the quadratic function for the change of variables of \cref{LinearCorrespondence} can be taken to be $- \frac{1}{2} \Theta (H_{0,2})$.
\end{proposition}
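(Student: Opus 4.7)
The plan is to realize the change of variables $p_k \mapsto p_k(\underline{q})$ of \cref{LinearCorrespondence} as the time-$1$ map of a one-parameter flow on Fock space generated by an element of $\widehat{\mf{gl}(\infty)}$, and to identify the scalar (normal-ordering) correction of that flow with $-\tfrac{1}{2}\Theta(H_{0,2})$. The flow parameter is supplied by the family $X_\beta(z) = X(\beta z)/\beta$ from \cref{FlowXalongbeta}: it interpolates between $X_0(z)=z$ (identity change) and $X_1=X$ (the desired change), and the expansions $X_\beta^k = \sum_m c_k^m(\beta)z^m$ give a $\beta$-family of ring homomorphisms $\underline{p}\mapsto\underline{p}(\beta;\underline{q})$ specializing to the identity at $\beta=0$ and to \cref{LinearCorrespondence} at $\beta=1$.

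First, I compute the infinitesimal generator of the $\beta$-flow. \Cref{FlowXalongbeta} yields
\[
\frac{\partial X_\beta^k}{\partial \beta} = \frac{k}{\beta}\big(Q(\beta z)-1\big)\,X_\beta^k,
\]
so, expanding $Q(\beta z) - 1$ in powers of $\beta z$, the flow on the $X_\beta^k$ side is a superposition of shift-by-$z^l$ operators, $l\geq 1$. Translating through the correspondence it becomes a time-dependent bilinear $V(\beta)$ in the Heisenberg generators $\{a_l\}$ of \cref{InfGrass}, whose coefficients are read off from the Taylor expansion of $\log X_\beta(z) = \log z - \psi(y(\beta z))$.

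Second, I lift $V(\beta)$ to an element $A(\beta)\in\widehat{\mf{gl}(\infty)}$ via the embedding $E_{ij}\mapsto Z_{ij}$ of \cref{InfGrass}. Normal-ordering the Heisenberg bilinears of $V(\beta)$ produces a scalar correction $\kappa(\beta;\underline{p})$, purely polynomial in $\underline{p}$ and quadratic in the $p_k$'s. Because $A(\beta)$ lives in $\widehat{\mf{gl}(\infty)}$, its time-ordered exponential preserves the space of tau-functions, and this realizes \cref{changeofvarsforKP} concretely: the time-$1$ map acts on $\Phi(\underline{p})$ as $(\Phi + \mc{Q})\big|_{p\to p(\underline{q})}$ with the quadratic correction $\mc{Q} = \int_0^1 \kappa(\beta;\underline{p})\,d\beta$.

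It remains to match this $\mc{Q}$ with $-\tfrac{1}{2}\Theta(H_{0,2})$. Packaging the $\beta$-integrated central contributions into a bivariate generating function in auxiliary variables $z_1,z_2$ and carrying out the integration, one compares with the rewriting
\[
H_{0,2}(X_1,X_2) = \log\frac{z_1^{-1}-z_2^{-1}}{X_1^{-1}-X_2^{-1}} = \log\frac{z_2-z_1}{X_2-X_1} - \psi(y(z_1)) - \psi(y(z_2))
\]
of \cref{OSUnstable}, obtained using $X = ze^{-\psi(y)}$. The logarithmic free-boson propagator piece $\log((z_2-z_1)/(X_2-X_1))$ is exactly the central charge accumulated along the flow by normal-ordering, while the two $-\psi(y(z_i))$ summands record the effect of the $\log X_\beta$ flow on each variable individually; the bosonic symmetry factor $\tfrac{1}{2}$ and the overall sign are standard. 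The main obstacle is this final identification: verifying the explicit combinatorial identity between the double series of normal-ordering constants and the closed-form logarithm, which amounts to a term-by-term bookkeeping of $X_\beta^{k_1}X_\beta^{k_2}$ against the Taylor expansion of $\log((z_2-z_1)/(X_2-X_1))$, integrated in $\beta$ from $0$ to $1$.
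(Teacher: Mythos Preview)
Your overall strategy --- interpolating via $X_\beta$ and realising the change of variables as the time-$1$ map of a flow generated by elements of $\widehat{\mf{gl}(\infty)}$ --- is the same as the paper's. However, there is a genuine conceptual error that derails your identification step.

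The correction is \emph{not} a normal-ordering scalar or a ``central charge accumulated along the flow''. For $m \geq 1$ the Virasoro operators $L_m = \tfrac{1}{2}\sum \normord{a_i a_{m-i}}$ have no normal-ordering ambiguity at all; only $L_0$ does. What you need is that, under $\Theta_\beta$, the vector field $z^{m+1}\tfrac{\del}{\del z}$ corresponds to the \emph{differential} part $\sum_k k q_{m+k}\tfrac{\del}{\del q_k}$ of $L_m$, and to promote this to the full $L_m \in \widehat{\mf{gl}(\infty)}$ one must add back its \emph{polynomial} part $\tfrac{1}{2}\sum_{k=1}^{m-1} q_k q_{m-k}$, coming from the $a_{>0}a_{>0}$ terms. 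This polynomial part, not a central term, is the quadratic correction $\mc{Q}$. Under the two-variable correspondence it reads $-\tfrac{1}{2}\,\dfrac{z_1^{m-1}-z_2^{m-1}}{z_1^{-1}-z_2^{-1}}$, so the correction associated with $f(z)\,z\tfrac{\del}{\del z}$ is the substitution $f(z)\,z\tfrac{\del}{\del z} \mapsto -\tfrac{1}{2}\,\dfrac{z_1^{-1}f(z_1)-z_2^{-1}f(z_2)}{z_1^{-1}-z_2^{-1}}$ applied to $f(z) = \tfrac{1}{\beta}\bigl(1 - Q(\beta z)^{-1}\bigr)$ from \cref{FlowXalongbeta}.

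Your proposed route to the identification --- splitting $H_{0,2}$ as $\log\frac{z_2-z_1}{X_2-X_1} - \psi(y(z_1)) - \psi(y(z_2))$ and matching each piece --- does not work cleanly: the single-variable summands have no image under the bilinear correspondence (they live at $q_0$), and $\log\frac{z_2-z_1}{X_2-X_1}$ by itself has nontrivial restrictions at $z_i=0$ that exactly cancel them. The paper avoids integrating in $\beta$ altogether: it computes $\tfrac{\del}{\del\beta}H_{0,2}^\beta\big|_{X\text{ const.}}$ directly using $\tfrac{\del z}{\del\beta}\big|_{X\text{ const.}} = \tfrac{1}{\beta}\bigl(Q(\beta z)^{-1}-1\bigr)z$ and finds it equals (up to the factor $2$ and an irrelevant constant) precisely the polynomial correction above. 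Since $H_{0,2}^{\beta=0}=0$, this immediately gives $\mc{Q} = -\tfrac{1}{2}\Theta(H_{0,2})$ without any term-by-term bookkeeping. You should redo the final step along these lines.
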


\begin{proof}
	Consider the more general linear correspondence $ \Theta_\beta $ between power series in $ X_\beta $ or $ z$ on the one hand and linear series in $ p$ or $ q$ on the other by
	\begin{align*}
		p_k 
		&\leftrightarrow X_\beta(z)^k \,, 
		& q_m 
		&\leftrightarrow z^m\,.
	\end{align*}
	This gives a linear morphism between power series in $ \{ p_m \}_{m \geq 1} $ and $ \{ q_d \}_{d \geq 1} $ by
	\begin{align*}
		p_k (\beta; \underline{q}) 
		&= \sum_{m = k}^\infty c_k^m q_m 
		&\text{with $ c_k^m $ given by}
		&& X_\beta^k 
		&= \sum_{m=k}^\infty c_k^m z^m\,,
	\end{align*}
	such that $ p_k(0; \underline{q}) = q_k$.\par
	Under $\Theta_\beta$, the operator $ z^{m+1} \frac{\del}{\del z}$ transforms into $ \sum_{k =1}^\infty k q_{m+k} \frac{\del}{\del q_k}$, which is the differential part of $L_m(\underline{q})$. The polynomial part of this operator is
	\begin{equation*}
		\frac{1}{2} \sum_{k=1}^{m-1} q_k q_{m-k}\,,
	\end{equation*}
	which under the correspondence transforms into
	\begin{equation*}
		\frac{1}{2} \sum_{k=1}^{m-1} z_1^{k} z_2^{m-k} = \frac{1}{2} z_1z_2 \frac{z_1^{m-1} - z_2^{m-1}}{z_1-z_2} = - \frac{1}{2} \frac{z_1^{m-1} -z_2^{m-1}}{z_1^{-1} -z_2^{-1}}\,.
	\end{equation*}
	Therefore, the correction to be made to \cref{FlowXalongbeta} to obtain a KP-preserving flow is found by the substitution $ f(z) z \frac{\del}{\del z} \to - \frac{1}{2} \frac{1}{z_1^{-1} - z_2^{-1}} \big( z_1^{-1} f(z_1) - z_2^{-1} f(z_2) \big) $ for a series $ f(z) \in z \C \llbracket z \rrbracket$. Note that $ \frac{1}{\beta} \Big( 1- \frac{1}{Q(\beta z)}\Big)$ satisfies these requirements, and we find that the differential operator of \cref{FlowXalongbeta} needs to be completed by
	\begin{equation*}
		-\frac{1}{2\beta (z_1^{-1} - z_2^{-1})} \bigg( z_1^{-1} \big( 1- \frac{1}{Q(\beta z_1)} \big) - z_2^{-1} \big( 1 - \frac{1}{Q(\beta z_2)} \big) \bigg) 
		=
		\frac{1}{2\beta(z_1^{-1} -z_2^{-1})} \bigg( \frac{1}{z_1 Q(\beta z_1)} - \frac{1}{z_2 Q(\beta z_2)}\bigg) -\frac{1}{2\beta}\,.
	\end{equation*}
	By a similar calculation as for \cref{FlowXalongbeta},
	\begin{equation*}
		\frac{\del z}{\del \beta}\Big|_{X \text{ const.}}  = \frac{1}{\beta} \Big(\frac{1}{Q(\beta z)} - 1\Big) z\,,
	\end{equation*}
	from which it follows that
	\begin{align*}
		-\frac{\del H_{0,2}}{\del \beta} \Big|_{X \text{ const.}} 
		&= 
		-\frac{\del}{\del \beta} \log \Big( \frac{z_1^{-1} - z_2^{-1}}{X_1^{-1} - X_2^{-1}}\Big)\Big|_{X \text{ const.}} 
		\\
		&=
		\frac{1}{z_1^{-1} - z_2^{-1}} \Big( z_1^{-2} \frac{\del z_1}{\del \beta}\Big|_{X \text{ const.}}  - z_2^{-2} \frac{\del z_2}{\del \beta}\Big|_{X \text{ const.}}  \Big)
		\\
		&= \frac{1}{\beta (z_1^{-1} -z_2^{-1})} \Big( \frac{1}{z_1 Q(\beta z_1)} - \frac{1}{z_2 Q(\beta z_2)} \Big) - \frac{1}{\beta}\,,
	\end{align*}
	which, up to a factor $2$, is exactly the polynomial correction needed.\par
	From these calculations, we find that 
	\begin{equation*}
		A \coloneqq \Big( 1- \frac{1}{Q(\beta z)} \Big) \frac{z}{\beta} \frac{\del}{\del z} -\frac{1}{2}\frac{\del H_{0,2}}{\del \beta} \Big|_{X \text{ const.}} 
	\end{equation*}
	corresponds to a linear combination of $ L_m$ under $\Theta_\beta$, and hence preserves KP.
	Now consider a KP tau-function $ \Phi (\underline{p})$ and define the function $ Z(\beta, \underline{q}) = \exp\big( \Phi (\underline{p}(\beta, \underline{q}) - \frac{1}{2} \Theta (H_{0,2}) )\big)$. Then
	\begin{align}
		\frac{\del}{\del \beta} Z
		&=
		\Big( \sum_{k=1}^\infty \frac{\del p_k(\beta, \underline{q})}{\del \beta} \frac{\del}{\del p_k} - \frac{1}{2} \Theta \Big(\frac{\del H_{0,2}}{\del \beta} \Big|_{X \text{ const.}}  \Big) \Big) Z
		\\
		&=
		\Theta\Big( \Big( 1- \frac{1}{Q(\beta z)} \Big) \frac{z}{\beta} \frac{\del}{\del z} - \frac{1}{2} \frac{\del H_{0,2}}{\del \beta} \Big|_{X \text{ const.}}  \Big) Z(\beta)
		\\
		&=
		\Theta( A )\, Z (\beta)
	\end{align}
	As $ Z(0) = Z$, and $ \Theta (A) $ preserves $ \tau $-functions of KP, this automorphism does indeed preserve solutions.
\end{proof}

\begin{corollary}\label{KPtauWithoutUnstable}
	For $Z(\underline{p})$ defined by \cref{DefHypGeo}, $ Z(\underline{p}) \exp \big(- \Theta ( \hbar^{-1}H_{0,1} + \frac{1}{2} H_{0,2})\big) \big|_{p \to p(\underline{q})} $ is also a KP tau-function, whose logarithm does not contain unstable terms.
\end{corollary}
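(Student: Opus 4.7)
The plan is to reduce the statement to \cref{KPpreservingBDKS} by absorbing the linear correction $-\hbar^{-1}\Theta(H_{0,1})$ into a Heisenberg action on the original tau-function, and then to verify by bookkeeping that the two subtracted terms match precisely the unstable part of $\log Z$.

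First, I would unwind the bookkeeping relating $F = \log Z$ to the generating series $H_n$. Because $F$ is symmetric in its arguments, the Taylor expansion of $F$ in the $p_k$ regroups as $F = \sum_{n \geq 1} \frac{1}{n!}\,\Theta(H_n)$, with the understanding that $\Theta$ sends $X_i^{k_i}$ to $p_{k_i}$ on each factor. Inserting the genus decomposition \cref{HnGenusDecomp} and separating the unstable pieces $(g,n) = (0,1)$ and $(g,n) = (0,2)$, one gets
\begin{equation*}
    F \;=\; \hbar^{-1}\Theta(H_{0,1}) \;+\; \tfrac{1}{2}\Theta(H_{0,2}) \;+\; F_{\mathrm{stab}},
\end{equation*}
where $F_{\mathrm{stab}}$ collects all $(g,n)$ with $2g-2+n > 0$. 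This verifies the claim that the logarithm of $Z(\underline{p})\exp\!\big(-\Theta(\hbar^{-1}H_{0,1} + \tfrac{1}{2}H_{0,2})\big)$ contains no unstable terms, both before and after the substitution $p \to p(\underline{q})$ (which is $\C$-linear and so preserves the $(g,n)$-decomposition).

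Next, for the tau-function property, I would split the two corrections. The factor $\exp(-\hbar^{-1}\Theta(H_{0,1}))$ is the exponential of a $\C$-linear functional of the $p_k$, which corresponds to the action of an element of the Heisenberg subalgebra $\bigoplus_{k>0} \C\,a_k \subset \widehat{\mathfrak{gl}(\infty)}$. Since the space of KP tau-functions is preserved by the $\widehat{\mathfrak{gl}(\infty)}$-action, the modified series
\begin{equation*}
    \widetilde Z(\underline{p}) \;\coloneqq\; Z(\underline{p})\,\exp\!\big(-\hbar^{-1}\Theta(H_{0,1})\big)
\end{equation*}
is again a KP tau-function in the variables $t_k = p_k/k$.

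Finally, I would apply \cref{KPpreservingBDKS} to $\widetilde Z$: the change of variables $p \to p(\underline{q})$ corrected by $-\tfrac{1}{2}\Theta(H_{0,2})$ (with the convention fixed in the proof of that proposition, viewing $\Theta(H_{0,2})$ in either $p$'s followed by substitution or directly in $(z_1,z_2) \leftrightarrow (q_{\cdot},q_{\cdot})$—these agree by construction of $p(\underline{q})$) sends KP tau-functions to KP tau-functions. Composing gives
\begin{equation*}
    \widetilde Z(\underline{p}(\underline{q}))\,\exp\!\big(-\tfrac{1}{2}\Theta(H_{0,2})\big) \;=\; \bigl( Z(\underline{p})\,\exp\!\big(-\Theta(\hbar^{-1}H_{0,1} + \tfrac{1}{2}H_{0,2})\big) \bigr)\!\big|_{p \to p(\underline{q})},
\end{equation*}
which is a KP tau-function, as required.

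The only non-routine step is the combinatorial identification in the first paragraph: one has to be careful with the $\frac{1}{n!}$ symmetry factor relating the multilinear $H_{g,n}$ (evaluated at $n$ distinct $X$-arguments) to the $n$-point component of $F$, which accounts for the factor $\frac{1}{2}$ in front of $H_{0,2}$ but no factor in front of $H_{0,1}$. Once this is handled, both the ``no unstable terms'' assertion and the tau-function assertion follow directly from \cref{KPpreservingBDKS} together with the Heisenberg action on tau-functions.
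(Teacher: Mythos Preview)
Your proof is correct and follows essentially the same route as the paper: remove the $(0,1)$ piece first, then apply \cref{KPpreservingBDKS}. The only difference is in how you justify that subtracting the linear term $\hbar^{-1}\Theta(H_{0,1})$ preserves the tau-property: you invoke the Heisenberg subalgebra of $\widehat{\mf{gl}(\infty)}$, whereas the paper simply observes that every KP equation involves only second and higher derivatives of $F=\log\tau$, so adding a linear function to $F$ is harmless. Both are one-line standard facts; your extra paragraph unpacking $F=\hbar^{-1}\Theta(H_{0,1})+\tfrac12\Theta(H_{0,2})+F_{\mathrm{stab}}$ makes the ``no unstable terms'' clause explicit, which the paper leaves implicit.
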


\begin{proof}
	As all equations in the KP hierarchy only contain at least second derivatives of $F$, addition of a linear term $- \Theta ( \hbar^{-1} H_{0,1}) $ preserves solutions. By \cref{KPpreservingBDKS}, subtracting the $ (0,2)$ term and changing $ p \mapsto p(\underline{q})$ is an automorphism as well.
\end{proof}

\begin{corollary}\label{XMoebius}
	In case $X(z)$ is a Möbius transformation with the shape of \cref{XfromKP}, i.e. $ X(z) = \frac{az}{1+ bz}$ (taking into account \cref{ConstantTermpsi}), this quadratic function can be taken to be $0$.
\end{corollary}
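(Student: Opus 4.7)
The plan is to trace through the proof of \cref{KPpreservingBDKS} and show that for $X(z) = \frac{az}{1+bz}$ the polynomial correction coming from $-\tfrac{1}{2}\Theta(H_{0,2})$ vanishes. The computation is direct once one first invokes \cref{ConstantTermpsi} to absorb the factor of $a$ into a rescaling of $z$, so that without loss of generality $X(z) = \frac{z}{1+bz}$, corresponding to $\psi(y(z)) = \log(1+bz)$ with no constant term.

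I would first check that the Möbius form is preserved by the $\beta$-flow: $X_\beta(z) = \frac{1}{\beta}X(\beta z) = \frac{z}{1+b\beta z}$, hence
\[
X_\beta(z_1)^{-1} - X_\beta(z_2)^{-1} = z_1^{-1} - z_2^{-1}.
\]
Consequently $H_{0,2}(X_\beta(z_1),X_\beta(z_2)) = 0$ identically in $\beta, z_1, z_2$, and in particular $\partial_\beta H_{0,2}\big|_{X \text{ const.}} = 0$. Reading the proof of \cref{KPpreservingBDKS}, the polynomial correction term that completes the flow operator to an element of $\widehat{\mathfrak{gl}(\infty)}$ is precisely a multiple of this derivative, so it vanishes.

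For a second, operator-theoretic viewpoint, I would compute $Q(z) = z\,d\log X/dz = \frac{1}{1+bz}$, so that the generator in \cref{FlowXalongbeta} becomes
\[
\frac{1}{\beta}\Big(1-\frac{1}{Q(\beta z)}\Big)z\frac{\partial}{\partial z} = -bz^{2}\frac{\partial}{\partial z}.
\]
Only the single mode $z^{m+1}\partial_z$ with $m=1$ appears, and the polynomial completion $\frac{1}{2}\sum_{k=1}^{m-1}q_kq_{m-k}$ computed in the proof of \cref{KPpreservingBDKS} is an empty sum for $m=1$. Hence no quadratic correction is required, and the quadratic function $Q(\underline{p})$ in \cref{changeofvarsforKP} can be taken to be $0$.

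The only mild obstacle is the bookkeeping around the constant term in $\psi$: without first applying \cref{ConstantTermpsi} one finds $H_{0,2}=\log a$, a constant which also vanishes under $\Theta$ (since $\Theta$ sees only positive-degree monomials in $X_1,X_2$) but is less transparently zero; after the reduction to $a = 1$, both viewpoints immediately yield the result.
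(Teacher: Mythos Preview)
Your proposal is correct and follows essentially the same route as the paper: compute $H_{0,2}$ for the M\"obius case, observe it is constant (the paper gets $\log a$ directly, you first normalise to $a=1$ via \cref{ConstantTermpsi} to get $0$), and conclude that $\partial_\beta H_{0,2}|_{X\text{ const.}}=0$, so the polynomial completion in the proof of \cref{KPpreservingBDKS} vanishes. Your second, operator-theoretic viewpoint---that $Q(z)^{-1}=1+bz$ forces only the $m=1$ Virasoro mode, whose polynomial part $\tfrac12\sum_{k=1}^{0}q_kq_{m-k}$ is empty---is a nice concrete addition; the paper does not spell this out here but it is exactly the $r=0$ case alluded to in \cref{Finiteness}.
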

\begin{proof}
	By direct calculation,
	\begin{equation*}
		H_{0,2}(z_1,z_2) = \log \Big( \frac{z_1^{-1} - z_2^{-1}}{X(z_1)^{-1} - X(z_2)^{-1}} \Big) =\log a \,.
	\end{equation*}
	Comparing this with the proof of \cref{KPpreservingBDKS}, the quadratic correction is needed to complete the operator $A$, which only depends on $ \frac{\del H_{0,2}}{\del \beta}$. As this vanishes in the present case, we may as well omit the entire correction.
\end{proof}

\begin{remark}
	The usual $B$-function of topological recursion,
	\begin{equation}
		B(z_1,z_2) = \frac{dz_1 \, dz_2}{(z_1-z_2)^2} = d_1 d_2 \log (z_1^{-1} - z_2^{-1})\,,
	\end{equation}
	is invariant under all Möbius transformations, so $ d_1 d_2 H_{0,2}$ vanishes if $X$ is any Möbius transformation. However, this is not the case for $H_{0,2}$ itself: it is invariant under a one-dimensional subgroup, changes by a constant under the two-dimensional subgroup above, but under other Möbius transformations also changes by addition of terms like $ \log z_i$.\par
	Viewed another way, these more general Möbius transformations would take us out of the realm of formal power series in $z$. However, in a space of functions, a shift $ z \mapsto z+c $ does preserve the KP hierarchy, so if the formal power series converges to a function on a large enough domain, this shift does preserve KP. This argument is essentially taken from \cite[Section 8]{KazarianHodge}. In particular, under the `natural analytic assumptions' of \cite[section 1.3]{BDKS20a}, i.e. the assumptions in the second part of \cref{OScorrelators}, the $ H_{g,n}$ do extend to rational functions on all of $ \P^1$, so this shift is well-defined.
\end{remark}

\subsection{KP for intersection numbers}\label{KPwithTR}

Now we will restrict to the cases where topological recursion, and hence \cref{EynardDOSS}, can be used, in order to relate to intersection numbers.
In this case, the following holds from \cref{CorrelatorsAsIntersection}.
\begin{equation}
       F (\underline{p})
       = 
      \bigg(  \hbar^{-1} H_{0,1} + \frac{1}{2} H_{0,2} + \!\!\! \sum_{2g-2+n>0} \!\! \frac{\hbar^{2g-2+n}}{n!}\!\!\!\!  \sum_{j_1, \dotsc, j_n \in J} \int_{\Mgn} \!\!\! \Omega_{g,n} (e_{j_1} \otimes \dotsb \otimes e_{j_n}) \prod_{i=1}^n \sum_{k_i=0}^\infty \psi_i^{k_i} \xi_{k_i}^{j_i} (z_i) \bigg) \bigg|_{X_i^{k_i} \to p_{k_i}},
\end{equation}
if we define $ \xi_k^j(z) \coloneqq \int_{z'=r_j}^z d\xi^j_k(z')$, noting that due to the shape of the $H_{g,n}$ in \cref{HnDef,HnGenusDecomp} and $X(z)$ in \cref{XfromKP}, the $H_{g,n}$ have no constant terms in $z_i$.

Under the correspondence $ p_k \leftrightarrow X^k $, $ q_m \leftrightarrow z^m$ of \cref{LinearCorrespondence}, we define $T_k^j$ by
\begin{equation}\label{TsUnderCorrespondence}
	T^j_k(\underline{p}) 
	\leftrightarrow \frac{1}{dx} d\xi_k^j (z) = D^{k+1}  \frac{z^{j+1}}{j+1}\,,
\end{equation}
with $ D$ as in \cref{XfromKP}. Explicitly,
\begin{equation}
	T_{-1}^j = \frac{1}{j+1} q_{j+1}\,,
	\qquad
	T_{k+1}^j = \sum_{m=1}^\infty \sum_{l=0}^\infty m \mc{T}_l q_{m+l} \frac{\del}{\del q_m} T_k^j\,,
	\quad 
	\text{with $\mc{T}_l$ given by}
	\quad
	Q(z)^{-1} = \sum_{l=0}^\infty \mc{T}_l z^l\,.
\end{equation}
Note that, even though the recursion operator for the $ T_k^j$ may have infinitely many terms, its alternate description via \cref{TsUnderCorrespondence} ensures they are well-defined in $ \Lambda$.

Therefore, by definition,
\begin{equation}\label{CohFTGeneratingInp}
	\begin{aligned}
		&
		\Theta \big(\hbar^{-1} H_{0,1} +\frac{1}{2} H_{0,2}\big)  + G_\Omega (T(\underline{p}))
		\\
		& \quad = 
		\Theta \big(\hbar^{-1} H_{0,1} +\frac{1}{2} H_{0,2}\big)  + \!\! \sum_{2g-2+n>0}\!\! \frac{\hbar^{2g-2+n}}{n!} \!\!\!\! \sum_{j_1, \dotsc, j_n \in J} \int_{\Mgn} \!\!\! \Omega_{g,n} (e_{j_1} \otimes \dotsb \otimes e_{j_n}) \prod_{i=1}^n \sum_{k_i=0}^\infty \psi_i^{k_i} T_{k_i}^{j_i}(\underline{p})
	\end{aligned}
\end{equation}
is the logarithm of a tau-function, where $ \{ e_j  \}$ is the dual basis to the basis $ \{ d\xi_0^j\} $  of $ V^*$.

\begin{proposition}\label{ProofKPForBDKSIntNumbers}
	Suppose the pair of functions $ (\hat{\psi}, \hat{y})$ lies in family I or II of \cref{OScorrelators}, and let $\Omega$ be the cohomological field theory associated to the related topological recursion via \cref{EynardDOSS}. Then
	\begin{equation}
	Z_\Omega (\underline{q}) =\exp ( G_{\Omega}(T(p(\underline{q}))))
	\end{equation}
	is a KP tau-function.
\end{proposition}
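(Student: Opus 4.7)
The plan is to deduce the result as a direct consequence of two facts established earlier. The first is \cref{KPtauWithoutUnstable}, which says that $\exp\bigl((F - \Theta(\hbar^{-1} H_{0,1} + \tfrac{1}{2} H_{0,2}))|_{p \to p(\underline{q})}\bigr)$ is a KP tau-function in $\{t_d = q_d/d\}$. The second is the identity
\[
F(\underline{p}) \;=\; \Theta\bigl(\hbar^{-1} H_{0,1} + \tfrac{1}{2} H_{0,2}\bigr) + G_\Omega(T(\underline{p})),
\]
which is exactly the content of the equation displayed in \cref{CohFTGeneratingInp}. Substituting the second into the first gives $Z_\Omega(\underline{q}) = \exp(G_\Omega(T(\underline{p}(\underline{q}))))$ as a KP tau-function, which is the claim.

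The main task is to verify that both ingredients are genuinely available under the hypotheses of Family I or Family II. For \cref{KPtauWithoutUnstable}, the claim is immediate, since it relies only on the KP-tauness of the hypergeometric partition function and not on any analytic assumptions on $(\hat{\psi}, \hat{y})$. For the displayed identity, one needs full topological recursion \emph{with} the projection property; \cref{OScorrelators} guarantees this precisely in the two families, so \cref{EynardDOSS} then produces a cohomological field theory $\Omega$ on the space dual to the $V^*$ spanned by the residueless forms $d\xi^j_k$.

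What remains is to match the Eynard-DOSS expansion with the recursion defining the $T^j_k$. Under the correspondence $\Theta$ of \cref{LinearCorrespondence}, where $X^k \leftrightarrow p_k$ and $z^m \leftrightarrow q_m$, the operator $D = \partial/\partial x$ acting on functions of $z$ has the form $Q(z)^{-1} z \partial/\partial z$ with $Q(z)^{-1} = \sum_l \mc{T}_l z^l$; this translates on the $p$-side into $\sum_{m,l} m \mc{T}_l p_{m+l} \partial/\partial p_m$, which is exactly the recursion operator in the statement. Iteratively applying it to the starting value $T^j_{-1} = q_{j+1}/(j+1)$ reproduces, on the $z$-side, the basis $\xi^j = (dx/dz)^{-1} z^j$ and its iterates $d\xi^j_k = (d \circ \tfrac{1}{dx})^k d\xi^j$ appearing in \cref{EynardDOSS}. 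Integrating \cref{CorrelatorsAsIntersection} term by term via $\omega_{g,n} = d_1 \dotsb d_n H_{g,n}$ (modulo the $(0,2)$ double-pole correction) and summing over $(g,n)$ then yields the displayed identity. The main, though entirely routine, obstacle is this recursive bookkeeping, including the check that no unstable contributions beyond $H_{0,1}$ and $H_{0,2}$ appear when integrating the $\omega_{g,n}$ back to the $H_{g,n}$; once this is in place, the proposition follows in one line.
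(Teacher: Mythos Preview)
Your proposal is correct and follows exactly the paper's approach: the paper's proof is literally the single line ``Apply \cref{KPtauWithoutUnstable} to the exponent of \cref{CohFTGeneratingInp},'' and your plan amounts to spelling out precisely this, together with a recap of why the identity \cref{CohFTGeneratingInp} holds under the Family I/II hypotheses. One small slip: when you translate $D = Q(z)^{-1} z\,\partial/\partial z$ under $\Theta$, it lands on the $q$-side (since $z^m \leftrightarrow q_m$), so the recursion operator should be written with $q$'s, not $p$'s.
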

\begin{proof}
	Apply \cref{KPtauWithoutUnstable} to the exponent of \cref{CohFTGeneratingInp}.
\end{proof}

\subsection{Finiteness of the transformation}\label{Finiteness}

The operator $A$ in the proof of \cref{KPpreservingBDKS} corresponds to a finite sum of $ L_m$ if and only if $ Q(z)^{-1}$ is a polynomial in $z$. As this case seems particularly nice, we will investigate it here. Note that this condition dependent on the parameter $z$ on the spectral curve, cf. the difference between \cref{NaiveHodge} and \cref{RecallHodge}.\par
Write $ P(z) = Q(z)^{-1}$ for this polynomial, and write $r+1$ for its degree. From \cref{XfromKP}, it follows that $ P(0)=1$, so we may write
\begin{equation}
	P(z) = \prod_{j=1}^{r+1} (1- c_j z)\,.
\end{equation}
We immediately see that $ dx(z) = \frac{dz}{z P(z)}$, and hence the spectral curve has a unique ramification point, $\infty$, of ramification index $r$. This is also the rank of the associated Frobenius algebra. But we can do better. 
By calculating the residues in $v$ of
\begin{equation}
\frac{v^{r+1} dv}{(1-vz) \prod_{k=1}^{r+1} (v-c_k)}\,
\end{equation}
and using that they sum to zero, one can check that (if all $ c_j $ are distinct)\footnote{If some $c_j$ coincide, the residue argument still holds, but the result changes.}
\begin{equation}
	\frac{dx}{dz} = \frac{1}{z} + \sum_{j=1}^{r+1} \frac{c_j^{r+1}}{\prod_{k \neq j} (c_j - c_k)} \frac{1}{1-c_jz}\,,
\end{equation}
from which we see that
\begin{equation}
	x(z) = \log z - \sum_{j=1}^{r+1} \prod_{k \neq j} \big( 1- \frac{c_k}{c_j} \big)^{-1} \log (1-c_jz)\,.
\end{equation}
If $r=0$, $dx$ has two (simple) poles, and hence no zeroes. In fact, in this case, $X$ is a Möbius transformation.\par
If $r =1$, this recovers the triple Hodge curve, studied in \cref{THsection} below.\par
If $ r >1$, the Frobenius algebra is not semi-simple: it seems to be a deformation of the algebra corresponding to Witten's $ r+1$-spin cohomological field theory, which is given by $ x = y^{r+1}$, cf. \cite{Wit93,DNOPS16,BCEG21}. This class fits in Alexandrov's theory of the deformed generalised Kontsevich model \cite{Ale21}: it seems like it is a complementary subspace of the polynomial deformations of the Witten $r+1$-spin theory.
\par
Interestingly, except for special choices of $ c_j$, these cases seem not to be covered in the two families in \cref{OScorrelators} for which topological recursion is proved (for any choice of $y$). Even the $r=1$ case does not fall in that scope, unless $ \frac{c_1}{c_2} \in \Q$.

\subsection{The case of triple Hodge integrals}
\label{THsection}

Let us now consider the special case of triple Hodge integrals. The approach taken in this section overlaps with the previous results, but is also slightly different in details, adapted to this specific problem. For example, we do not use the formal parameter $ \hbar $, but make another convenient choice. In this case, the ELSV-type formula is completely explicit, and there is no need to take the detour via topological recursion.\par
The coordinate change we want to perform is inspired by the Mari\~{n}o-Vafa formula.

\begin{equation}
\begin{split}
	F \big(w, \beta; \underline{p} \big) &= \log \bigg( \sum_{m=0}^\infty \sum_{\mu, \nu \vdash m} \frac{\chi^\nu_\mu}{z_\mu} e^{(1+\frac{w}{2})\beta f_2(\nu )} \prod_{\square \in \nu } \frac{\beta w}{\varsigma (\beta w h_\square )} p_\mu \bigg)
	\\
	&
	= \sum_\mu \sum_{g = 0}^\infty \frac{(w + 1)^{g +n-1}}{|\Aut \mu |}   \prod_{i=1}^n \frac{\prod_{j=1}^{\mu_i-1} (\mu_i + j w )}{(\mu_i-1) !} \int_{\Mgn}\!\! \frac{\Lambda (-1) \Lambda (-w ) \Lambda \big(\frac{w}{w + 1}\big)}{\prod_{i=1}^n (1- \mu_i \psi_i )} \beta^{2g-2+n + | \mu |}  p_\mu \,.
\end{split}
\end{equation}

As $ 2g-2+n+|\mu | = \frac{2}{3} \dim \Mgn + \sum_{i=1}^n (\mu_i + \frac{1}{3} )$ and $ g+n-1 = \frac{1}{3} \dim \Mgn + \sum_{i=1}^n \frac{2}{3} $, we get after rewriting $ u \coloneqq \beta^\frac{1}{3}(w +1)^\frac{1}{6} $
\begin{equation}
	\begin{split}
	F \big(w, \beta; \underline{p} \big) &= \sum_\mu  \frac{1}{|\Aut \mu |}\sum_{g = 0}^\infty \prod_{i=1}^n u^4 \frac{\prod_{j=1}^{\mu_i-1} (\mu_i + j w )\beta}{(\mu_i-1) !} \int_{\Mgn} \frac{\Lambda (-u^2) \Lambda (-u^2w) \Lambda \big(\frac{u^2w}{w + 1}\big)}{\prod_{i=1}^n (1- \mu_i u^2\psi_i )} p_\mu 
	\\
	\label{THHurwitztoTHgen} 
	&
	= \sum_{g=0}^\infty \sum_{n=1}^\infty \frac{1}{n!} \int_{\Mgn} \Lambda (-u^2) \Lambda (-u^2w) \Lambda \Big(\frac{u^2w}{w + 1}\Big) \prod_{i=1}^n \sum_{d=0}^\infty T_d (\underline{p}) \psi_i^d 
	\\
	&
	= G_\textup{TH} \Big( -u^2,-u^2w, \frac{u^2w}{w +1} ; T(\underline{p}) \Big) + H_{0,1} + \frac{1}{2} H_{0,2} \,,
	\end{split}
\end{equation}
where
\begin{equation}\label{Tasp}
	T_d (\underline{p})\coloneqq \sum_{m =1}^\infty \frac{\prod_{j=1}^{m-1} (m+ j w )}{(m-1) !} m^d u^{2d+4} \beta^{m-1}p_m\,.
\end{equation}
Hence, our goal is to show that this change of variables and addition of the unstable terms preserves solutions of the KP hierarchy. 

\begin{lemma}\label{Inverseseries}
	The following two expressions are inverse to each other:
	\begin{equation}
		X(z) 
		= \frac{z}{1 + (w + 1)\beta z} \bigg( \frac{1+\beta z}{1+ (w + 1)\beta z}\bigg)^{\frac{1}{w}} \,; 
		\qquad
		z(X) 
		= \sum_{m = 1}^\infty \frac{\prod_{j=1}^{m-1} (m+j w )}{(m-1)!} \beta^{m-1} X^m\,. \label{x-zInversion}
	\end{equation}
	
\end{lemma}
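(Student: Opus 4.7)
The plan is to apply the Lagrange inversion formula after a convenient change of variable. Writing $X(z) = z/g(z)$ with
\[
	g(z) = (1+(w+1)\beta z)^{(w+1)/w}(1+\beta z)^{-1/w},
\]
the inverse series has coefficients $[X^m]\,z(X) = \frac{1}{m}[z^{m-1}]g(z)^m$, so the task reduces to evaluating this coefficient.

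The two factors of $g(z)^m$ collapse to a single one after the substitution $\zeta \coloneqq \beta z/(1+\beta z)$, which gives $1+\beta z = (1-\zeta)^{-1}$, $1+(w+1)\beta z = (1+w\zeta)(1-\zeta)^{-1}$, and, on setting $Y \coloneqq \beta X$,
\[
	Y = \zeta\,(1+w\zeta)^{-(w+1)/w}.
\]
This is a pure-power Lagrange inversion problem. Applying the Lagrange--B\"urmann formula $[Y^m]\,\zeta^k = \frac{k}{m}[\zeta^{m-k}](1+w\zeta)^{m(w+1)/w}$ to each term in $\beta z = \zeta/(1-\zeta) = \sum_{k\geq 1}\zeta^k$ yields
\[
	[Y^m]\,\beta z = \sum_{k=1}^{m}\frac{k}{m}\binom{m(w+1)/w}{m-k}\,w^{m-k}.
\]

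The simplification of the last expression is elementary. Re-indexing by $j \coloneqq m-k$ and using $j\binom{N}{j} = N\binom{N-1}{j-1}$ with $N \coloneqq m(w+1)/w$ (so that $N/m = (w+1)/w$), together with Pascal's rule $\binom{N}{j} = \binom{N-1}{j} + \binom{N-1}{j-1}$, makes all but the top term of the sum telescope, leaving $w^{m-1}\binom{m(w+1)/w - 1}{m-1}$. Rewriting this binomial coefficient as a product gives $\prod_{j=1}^{m-1}(m+jw)/(m-1)!$ exactly, which matches the stated coefficient after translating back via $Y = \beta X$. The main obstacle is spotting the substitution $\zeta = \beta z/(1+\beta z)$; once it is found, no nontrivial identities beyond elementary binomial manipulations are required. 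An alternative route is to verify that the proposed series for $z(X)$ satisfies the ODE $X\,z'(X) = z\,(1+\beta z)(1+(w+1)\beta z)$, which follows from the identity $Q(z)^{-1} = (1+\beta z)(1+(w+1)\beta z)$ as in \cref{Finiteness}, and then invoke uniqueness of the formal solution with $z(0)=0$, $z'(0)=1$.
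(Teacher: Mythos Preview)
Your main argument is correct and takes a genuinely different route from the paper. Both proofs start from Lagrange inversion---the paper's residue formula $C_m = \Res_{z=0} z\,X^{-m}\,dX/X$ is Lagrange inversion in residue form---but they diverge in how the resulting coefficient is evaluated. The paper expands $(1+z)^{-m/w-1}(1+(w+1)z)^{m(w+1)/w-1}$ directly as a product of two binomial series, extracts the coefficient of $z^{m-1}$ as a single sum, and collapses it using the binomial identity $\sum_{k=0}^{m-1}\binom{m-1}{k}(-1)^k(w+1)^{m-1-k}=w^{m-1}$. You instead perform the M\"obius substitution $\zeta = \beta z/(1+\beta z)$ first, which merges the two factors into the single power $(1+w\zeta)^{-(w+1)/w}$; you then expand $\beta z = \zeta/(1-\zeta)$ term by term, apply Lagrange--B\"urmann to each $\zeta^k$, and telescope the result via Pascal's rule. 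Your route trades the need to spot the substitution for a cleaner final simplification; the paper's route is more mechanical but involves a longer explicit expansion. One caveat: the alternative ODE route you sketch at the end is not a shortcut as written, since verifying that the explicit series satisfies $X\,z'(X)=z(1+\beta z)(1+(w+1)\beta z)$ amounts to a nonlinear convolution identity on the coefficients that is no easier than the direct computation.
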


\begin{proof}
	This can be proved by a residue calculation. Start from the formula for $ X(z) $ with $ \beta =1 $ and write $ z(X) = \sum_{m=1}^\infty C_m X^m $. Then $ C_m = \Res_{X=0} z \, X^{-m} \frac{dX}{X} $, and
	\begin{align*}
		\frac{dX}{X} 
		&
		= \frac{dz}{z} + \frac{d(1+z)^\frac{1}{w}}{(1+z)^{\frac{1}{w}}} + \frac{d (1+ (w + 1)z)^{-\frac{w + 1}{w}})}{(1+ (w +1)z)^{-\frac{w +1}{w}}} 
		\\
		&
		= \frac{dz}{z} + \frac{1}{w} \frac{dz}{1+z} - \frac{(w + 1)^2}{w} \frac{ dz}{1+ (w +1)z} 
		\\
		&
		= \frac{dz}{z(1+z)(1+ (w +1)z)}\,.
	\end{align*}
	Therefore,
	\begin{align*}
		C_m 
		&
		= \Res_{X=0}z \, X^{-m} \frac{dz}{z(1+z)(1+ (w +1)z)}
		\\
		&
		= \Res_{z=0} z^{-m} (1+z)^{-\frac{m}{w} -1} (1+ (w +1)z)^{m \frac{w + 1}{w} -1} dz
		\\
		&
		= \Res_{z=0} z^{-m} \sum_{k=0}^\infty \frac{\prod_{i=0}^{k-1} (-\frac{m}{w}-1-i)}{k!} z^k \sum_{l=0}^\infty \frac{\prod_{j=0}^{l-1} (m\frac{w+1}{w}-1-j)}{l!} (w +1)^l z^l dz
		\\
		&
		= \Res_{z=0} z^{-m} \sum_{k=0}^\infty \frac{\prod_{i=1}^{k} (\frac{m}{w}+i)}{k!} (-1)^k z^k \sum_{l=0}^\infty \frac{\prod_{j=m-l}^{m-1} (\frac{m}{w} +j)}{l!} (w +1)^l z^l dz
		\\
		&
		= \sum_{k=0}^{m-1}  \frac{\prod_{i = 1}^k(\frac{m}{w} +i)}{k!} \frac{\prod_{j=k+1}^{m-1}( \frac{m}{w} +j)}{(m-k-1)!}(-1)^k (w +1)^{m-k-1}
		\\
		&
		= \frac{\prod_{j=1}^{m-1} (\frac{m}{w} +j)}{(m-1)!} \sum_{k=0}^{m-1} \binom{m-1}{k} (-1)^k (w +1)^{m-k-1} 
		\\
		&
		= \frac{\prod_{j=1}^{m-1} (\frac{m}{w} +j)}{(m-1)!} w^{m-1} = \frac{\prod_{j=1}^{m-1} (m+jw )}{(m-1)!}\,.
	\end{align*}
	Finally, $ \beta $ can be introduced in this formula by scaling $ z \to \beta z$, $ X \to \beta X $.
\end{proof}
%

\begin{corollary}
	The expressions for $X(z) $ in \cref{Inverseseries} and \cref{MarinoVafaIsBDKS} are related by a Möbius transformation
	\begin{equation}
		z \mapsto \frac{z}{1+(w+1)\beta z} \,.
	\end{equation}
	Hence, by \cref{XMoebius}, they require the same correction term for their induced linear change of variables.
\end{corollary}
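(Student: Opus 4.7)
The plan is to prove this corollary by a short algebraic verification followed by tracking of the correction term through the reparametrization. First, I would check the stated Möbius relation by direct substitution. Setting $M(z) \coloneqq \frac{z}{1+(w+1)\beta z}$ and substituting $z \mapsto M(z)$ in the expression $X_2(z) = z(1-\beta w z)^{1/w}$ from \cref{MarinoVafaIsBDKS}, the essential identity is
\[
1 - \beta w \cdot M(z) = \frac{1+(w+1)\beta z - \beta w z}{1+(w+1)\beta z} = \frac{1+\beta z}{1+(w+1)\beta z},
\]
which reduces the substituted expression immediately to $X_1(z)$ from \cref{Inverseseries}. This is short and mechanical; the only subtlety is guessing $M$, but the common denominator $1+(w+1)\beta z$ appearing in $X_1$ essentially forces its form.

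Second, I would deduce the equivalence of correction terms. The transformation $M$ is of the form $\frac{z}{1+bz}$ with $b = (w+1)\beta$, and satisfies $M(z)^{-1} = z^{-1} + b$, hence $M(z_1)^{-1} - M(z_2)^{-1} = z_1^{-1} - z_2^{-1}$. Combined with $X_2(M(z)) = X_1(z)$, this shows that the unstable datum $H_{0,2}(z_1,z_2) = \log\bigl((z_1^{-1}-z_2^{-1})/(X_1^{-1}-X_2^{-1})\bigr)$ from \cref{OSUnstable} is literally the same function once the two $z$-parameters are identified via $M$. Equivalently, the Möbius reparametrization is exactly of the shape for which \cref{XMoebius} asserts that no quadratic correction is required, so the corrections associated to $X_1$ and $X_2$ must agree.

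The main expected obstacle is conceptual rather than computational: namely, being careful that the $T_k$-valued change of variables $\Theta$ is covariant with respect to the Möbius reparametrization in a way compatible with the correction. Once the invariance of $z_1^{-1}-z_2^{-1}$ under Möbius maps of the form $\frac{z}{1+bz}$ is recorded, the conclusion follows immediately by combining \cref{KPpreservingBDKS} with \cref{XMoebius}, as no further input is needed.
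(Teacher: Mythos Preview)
Your proposal is correct and matches the paper's intent: the paper states this corollary without an explicit proof, relying on the reader to verify the Möbius relation by substitution and then invoke \cref{XMoebius}. Your computation $1 - \beta w M(z) = \frac{1+\beta z}{1+(w+1)\beta z}$ is exactly the check needed, and your observation that $M(z)^{-1} = z^{-1} + (w+1)\beta$ preserves $z_1^{-1}-z_2^{-1}$ is precisely what makes \cref{XMoebius} applicable, so the two correction terms agree.
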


We see that in this particular case we may obtain the function $X$ in two different ways: from the general theory of \cref{OScorrelators}, or from the specific shape of the Mariño-Vafa formula, \cref{MVthm}. In fact, the second choice is nothing but choosing the spectral curve coordinate $ z$ to equal $ \xi$ (which is unique in this case), or in other words $ T_0 = q_1$.

%

\begin{remark}
	Under the correspondence of \cref{EynardDOSS}, the rank of the cohomological field theory corresponds to the number of zeroes of $dx$, counted with multiplicities. So for rank one, $ dx$ can only have one zero, and hence must have three poles. By Möbius transformation, we may place the zero at infinity, and two of the poles at $0$ and $ -1$, from which we find that $dx $ must correspond to the $\frac{dX}{X} $ found in the proof of \cref{Inverseseries}. This may explain in part why Alexandrov~\cite{Ale20} finds only the triple Hodge CohFT in the intersection of the orbits of the Givental and Heisenberg-Virasoro groups. However, $dx$ is not the only datum of a spectral curve, and while $ \P^1$ is rigid and has a unique $B$, it is not clear why there is no freedom in the choice of $dy$.
\end{remark}

\begin{lemma}\label{xdiffequation}
	The series $ X(z) $ from \cref{Inverseseries} satisfies the differential equation
	\begin{equation}
		\frac{\del X}{\del \beta} (z) = - \big( (w + 2)z + (w + 1)\beta z^2)z \frac{\del X}{\del z}(z)\,.
	\end{equation}
\end{lemma}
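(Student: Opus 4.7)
The plan is to exploit two elementary facts about $X(z)$: a scaling symmetry in $\beta z$, and the explicit form of its logarithmic $z$-derivative computed in \cref{Inverseseries}.

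First I would observe that the closed-form expression for $X(z)$ depends on $\beta$ and $z$ only through the product $\beta z$, apart from an overall prefactor. Concretely, if $\tilde{X}(u) \coloneqq X(u)\big|_{\beta = 1}$, then
\begin{equation*}
	X(z) = \beta^{-1} \tilde{X}(\beta z)\,.
\end{equation*}
Differentiating this identity separately in $z$ and in $\beta$ and eliminating $\tilde{X}'(\beta z)$ gives the general scaling relation
\begin{equation*}
	\beta \, \frac{\del X}{\del \beta}(z) = z \, \frac{\del X}{\del z}(z) - X(z)\,.
\end{equation*}

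Next I would compute $\partial_z \log X$ by logarithmic differentiation of the four factors in \cref{x-zInversion}. Combining the resulting partial fractions over the common denominator $z(1+\beta z)(1+(w+1)\beta z)$ — this is exactly the computation already carried out in the proof of \cref{Inverseseries} (with $\beta$ restored by the scaling $z \mapsto \beta z$) — yields
\begin{equation*}
	z \, \frac{\del X}{\del z}(z) = \frac{X(z)}{(1+\beta z)\bigl(1+(w+1)\beta z\bigr)}\,,
\end{equation*}
equivalently $X = (1+\beta z)(1+(w+1)\beta z)\, z\, \partial_z X$.

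Substituting this expression for $X$ into the scaling identity, one gets
\begin{equation*}
	\beta \, \frac{\del X}{\del \beta}(z) = \Bigl(1 - (1+\beta z)\bigl(1+(w+1)\beta z\bigr)\Bigr) z\, \frac{\del X}{\del z}(z)\,,
\end{equation*}
and the factor in parentheses expands as $-(w+2)\beta z - (w+1)\beta^2 z^2$, which produces the claimed equation after dividing by $\beta$. There is no real obstacle; the only point requiring care is the partial-fraction simplification of $\partial_z \log X$, but this is already available from \cref{Inverseseries}.
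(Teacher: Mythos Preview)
Your proof is correct and follows essentially the same route as the paper. The paper simply cites \cref{FlowXalongbeta}, whose content is precisely your scaling identity $\beta\,\partial_\beta X = z\,\partial_z X - X$ combined with the substitution $X = Q(\beta z)^{-1}\, z\,\partial_z X$, and then notes that here $Q(z)^{-1} = (1+z)(1+(w+1)z)$; you have inlined that lemma rather than invoking it.
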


\begin{proof}
	For $ X(z) = \frac{z}{1 + (w + 1) z} \Big( \frac{1+ z}{1+ (w + 1) z}\Big)^{\frac{1}{w}} $, we get $ Q(z)^{-1} = (1+z)(1+(w+1)z)$, which using \cref{FlowXalongbeta} immediately yields the result.
\end{proof}

We use this lemma in combination with the linear correspondence of \cref{LinearCorrespondence}, slightly adapted as follows: define a linear correspondence $ \Theta $ between power series in $ X$ or $ z$ on the one hand and linear series in $ p$ or $ \tilde{q}$ on the other by
\begin{align}
	p_k 
	&\leftrightarrow X^k \,, 
	& \tilde{q}_m 
	&\leftrightarrow z^m\,.
\end{align}

\begin{definition}
	We define a linear morphism between power series in $ \{ p_m \}_{m \geq 1} $ and $ \{ \tilde{q}_d \}_{d \geq 1} $ by
	\begin{align}
		p_k (\tilde{\underline{q}}) 
		&= \sum_{m = k}^\infty c_k^m \tilde{q}_m 
		&\text{with $ c_k^m $ given by}
		&& X^k 
		&= \sum_{m=k}^\infty c_k^m z^m\,.
	\end{align}
\end{definition}

Under the correspondence $ p_k \leftrightarrow X^k $, $ \tilde{q}_m \leftrightarrow z^m$, we have
\begin{align}
	T_d(\underline{p}) 
	&\leftrightarrow (u^2D)^d u^4 z \,; 
	& D 
	& \coloneqq X\frac{\del}{\del X} = \big( 1 + \beta z\big) \big( 1+(w + 1)\beta z\big) z \frac{\del }{\del z}\,.
\end{align}

In terms of $ \tilde{q}$-variables, this gives
\begin{align}
	T_d 
	&= u^2\sum_{m=1}^\infty m\big( \tilde{q}_m + (w +2)\beta \tilde{q}_{m+1} + (w +1)\beta^2 \tilde{q}_{m+2}\big) \frac{\del}{\del \tilde{q}_m} T_{d-1}\,; & T_0 &= u^4 \tilde{q}_1\,.
\end{align}

If we write $ q_m \coloneqq u^{4m}\tilde{q}_m $, and using $ \beta = \frac{u^3}{\sqrt{w +1}} $, this becomes
\begin{align}
	T_d 
	&= \sum_{m=1}^\infty m\Big( u^{4m+2} \tilde{q}_m + (w +2)\frac{u^3}{\sqrt{w +1}} u^{4m+2}\tilde{q}_{m+1} + u^6 u^{4m+2}\tilde{q}_{m+2}\Big) \frac{1}{u^{4m}}\frac{\del}{\del \tilde{q}_m} T_{d-1}
	\\
	&= \sum_{m=1}^\infty m\Big( u^2 q_m + \frac{u(w +2)}{\sqrt{w +1}} q_{m+1} +q_{m+2}\Big) \frac{\del}{\del q_m} T_{d-1}\,; 
	\\
	T_0 
	&= q_1\,.
\end{align}
This is exactly the definition given in \cref{thm:KPforTH}.

%

\begin{corollary}\label{quadraticcorrectionforx}
	For $ X(z) = \frac{z}{1 + (w + 1)\beta z} \big( \frac{1+\beta z}{1+ (w + 1)\beta z}\big)^{\frac{1}{w}}$, the quadratic correction of \cref{changeofvarsforKP} is $ Q = - \frac{1}{2} \Theta (H_{0,2}) $.
\end{corollary}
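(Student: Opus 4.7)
The plan is to invoke Proposition \ref{KPpreservingBDKS} directly, observing that this specific $X(z)$ falls squarely within its setup. That proposition says: for any hypergeometric tau-function in the sense of Theorem \ref{OScorrelators}, the quadratic function correcting the linear change of variables of Definition \ref{LinearCorrespondence} into a genuine KP automorphism can be taken to be $-\tfrac{1}{2}\Theta(H_{0,2})$.

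First, I would verify that the given $X$ does come from a hypergeometric tau-function in the shape of Theorem \ref{OScorrelators}. This is immediate in two ways: (i) $X(z) = z + O(z^2)$ by inspection, and by the preceding corollary $X$ is related by the Möbius transformation $z \mapsto z/(1+(w+1)\beta z)$ to the function of Lemma \ref{MarinoVafaIsBDKS}, which explicitly fits Family II with $\hat{\psi}(\hbar^2,y) = -y/w$ and the prescribed $\hat{y}$; (ii) the $\beta$-evolution identity Lemma \ref{xdiffequation} is precisely the instance of Lemma \ref{FlowXalongbeta} required by the proof of Proposition \ref{KPpreservingBDKS}, with $Q(z)^{-1} = (1+\beta z)(1+(w+1)\beta z)$ in the role of the polynomial that drives the flow. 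Either viewpoint suffices to place the present $X$ inside the scope of Proposition \ref{KPpreservingBDKS}.

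Having verified the hypotheses, one simply applies Proposition \ref{KPpreservingBDKS}: the $\beta$-flow of $X$ translates under $\Theta_\beta$ to a combination of the differential parts of $L_m$ operators plus a quadratic correction, and that correction is matched term by term to $-\tfrac{1}{2}\partial_\beta H_{0,2}\vert_{X\text{ const.}}$ using the explicit formula $H_{0,2}(X_1,X_2) = \log\bigl(\tfrac{z_1^{-1}-z_2^{-1}}{X_1^{-1}-X_2^{-1}}\bigr)$ and the chain rule. Integrating from $\beta = 0$ to $\beta = 1$ yields $Q = -\tfrac{1}{2}\Theta(H_{0,2})$. The only mild bookkeeping concern is that this section suppresses $\hbar$ and uses $\tilde{q}$ (and then $q$) in place of the general $q$; but the argument of Proposition \ref{KPpreservingBDKS} is purely formal in the power-sum and spectral-curve variables and so is unaffected. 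No new obstacle arises: all of the genuine work has already been done in the proof of Proposition \ref{KPpreservingBDKS}, and the present corollary is a direct specialisation.
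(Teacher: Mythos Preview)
Your proposal is correct and follows exactly the paper's approach: the paper's own proof is the single sentence ``The function $X(z)$ satisfies the conditions of \cref{OScorrelators}, so we may apply \cref{KPpreservingBDKS},'' and you have simply unpacked why those hypotheses hold (via \cref{MarinoVafaIsBDKS} and \cref{xdiffequation}) before invoking the proposition. No substantive difference.
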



\begin{proof}
	The function $ X(z)$ satisfies the conditions of \cref{OScorrelators}, so we may apply \cref{KPpreservingBDKS}.
\end{proof}

Now we are ready to prove the main result on KP integrability of triple Hodge integrals.

\begin{proof}[Proof of \cref{thm:KPforTH}]
	By \cref{MarinoVafaIsBDKS}, $ F $ is a solution of the KP hierarchy in the variables $ t_k \coloneqq \frac{p_k}{k} $. In the same way as for \cref{ProofKPForBDKSIntNumbers}, now using \cref{quadraticcorrectionforx,THHurwitztoTHgen}, 
	\begin{equation}
		G_\textup{TH} \Big( -u^2,-w u^2, \frac{w u^2}{w +1} ; \{ T_d(p(\tilde{q})) \}\Big) 
	\end{equation}
	is a solution of the KP hierarchy in the variables $ \frac{\tilde{q}_m}{m} $. As the KP hierarchy is quasi-homogeneous, rescaling $ \tilde{q}_m \to q_m $ preserves solutions. This completes the proof.
\end{proof}

\begin{remark}\label{changeofvarsforw=-1}
	The result in this subsection do hold for $ w = -1$ (ignoring powers of $ u $), but in this specific case $ X( z)$ is a Möbius transformation, so it reduces to the setting of \cref{XMoebius}. From another point of view, in this case the change of coordinates \cref{ptoq} is an isomorphism, whereas it gives a half-dimensional subspace in all other cases. Equations for this half-dimensional space, in the linear Hodge case, were found in~\cite{Ale15}, cf. also~\cite{Guo-Wang-LinearHodge} for a reformulation. These can be viewed as a deformation of the reduction from KP to KdV. Similar equations should exist for triple Hodge integrals as well, but clearly none of this works for $ w = -1$.\par
	In light of \cref{Finiteness}, one may expect a deformation of the reduction from KP to $r$-KdV or $r$-Gelfand-Dickey for the families found there.
\end{remark}

{\setlength\emergencystretch{\hsize}\hbadness=10000
\printbibliography}

\end{document}